\newcommand{\Hmm}[1]{\leavevmode{\marginpar{\tiny%
$\hbox to 0mm{\hspace*{-0.5mm}$\leftarrow$\hss}%
\vcenter{\vrule depth 0.1mm height 0.1mm width \the\marginparwidth}%
\hbox to
0mm{\hss$\rightarrow$\hspace*{-0.5mm}}$\\\relax\raggedright #1}}}
\newtheorem{thm}{Theorem}[section]
\newtheorem{cor}[thm]{Corollary}
\newtheorem{lemma}[thm]{Lemma}
\newtheorem{pro}[thm]{Proposition}
\theoremstyle{definition}
\newtheorem*{defi}{Definition}
\newtheorem{rem}[thm]{Remark}
\numberwithin{equation}{section}
\newcommand{\Z}{{\mathbb Z}}
\newcommand{\R}{{\mathbb R}}
\newcommand{\N}{{\mathbb N}}
\let\H\undefined
\let\L\undefined
\newcommand{\H}{H}
\newcommand{\L}{L}
\newcommand{\F}{F}
\newcommand{\ph}{{\varphi}}
\newcommand{\Om}{{\Omega}}
\newcommand{\lm}{{\lambda}}
\newcommand{\ow}[1]{\widetilde{ #1}}
     \def\Gd{\Delta}
\begin{document}
\title[Optimal Hardy inequalities on graphs]
{Optimal Hardy inequalities for Schr\"odinger operators on graphs}
\author[M.~Keller]{Matthias Keller}
\address{M.~Keller,  Institut f\"ur Mathematik, Universit\"at Potsdam
\\14476  Potsdam, Germany}
\email{mkeller@math.uni-potsdam.de}
\author[Y.~Pinchover]{Yehuda Pinchover}
\author[F.~Pogorzelski]{Felix Pogorzelski}
\address{Y.~Pinchover and F.~Pogorzelski, Department of Mathematics, Technion-Israel Institute of Technology, 32000 Haifa, Israel}
\email{pincho@technion.ac.il}
 \email{felixp@technion.ac.il}
\date{\today}
\begin{abstract}
For a given subcritical discrete Schr\"odinger operator $H$ on a  weighted infinite graph $X$, we construct a Hardy-weight $w$ which is {\em optimal} in the following sense. The operator
$H - \lambda w$ is subcritical in $X$ for all $\lambda < 1$, null-critical in $X$ for $\lambda = 1$, and supercritical near any neighborhood of infinity in $X$ for any $\lambda > 1$.
Our results rely on a criticality theory for Schr\"odinger operators on general weighted graphs.
\\[2mm]
\noindent  2000  \! {\em Mathematics  Subject  Classification.}
Primary  \! 39A12; Secondary  31C20, 31C25, 35B09, 35P05, 35R02, 47B39.\\[2mm]
\noindent {\em Keywords.} Green function, Ground state, Hardy inequality, Positive solutions, Discrete Schr\"odinger operators, Weighted graphs.
\end{abstract}
\maketitle
\setcounter{section}{-1}
\section{Introduction}
In 1921 Landau wrote a letter to Hardy which includes a proof of the following
inequality
\begin{align}\label{CHI}
 \sum_{n=0}^{\infty}|\ph(n)-\ph(n+1)|^{p}\ge
C_p\sum_{n={1}}^{\infty}w(n)|\ph(n)|^{p}
\end{align}
for all finitely supported $\ph:\N_0\to\R$ such that
$\ph(0)=0$ ,where
$$w(n):=\frac{1}{n^{p}}, \quad\mbox{and }
C_p := \left(\frac{p-1}{p}\right)^{p}, $$
with $ n\ge1 $, $ 1<p<\infty $ and $ \N_0:= \{0,1,2,3,\ldots\}. $
 This inequality
 was stated before by Hardy, and therefore, it is called a \emph{Hardy
inequality} (see \cite{KMP06} for a marvelous
description on the prehistory of the celebrated Hardy inequality). Since then Hardy-type inequalities have received an
enormous amount of attention.

By a Hardy-type inequality for a {\em nonnegative} operator $P$ we roughly mean that the inequality $P\geq Cw$ holds for some ``large'' weight function $w$ and optimal constant $C>0$. One particular focus in the literature lies on finding the sharp
constant $C$ to a prescribed Hardy-weight $w$ which is typically
an inverse square weight. For the
classical literature we refer here to the monographs
\cite{BEL15,OK90} and to references therein. Recent developments
include relationships with other functional inequalities, Hardy-type inequalities related to different boundary conditions
\cite{AY,DEFT,KL}, Hardy-type inequalities for fractional Schr\"odinger operators \cite{BD,FS08,FS10,LS}, and
Hardy inequalities for the Laplacian on metric trees \cite{EFK08,NS}.

\medskip

A conceptually different approach was taken by \cite{DFP} in the
context of general elliptic Schr\"odinger operators $P$ in the case
$p=2$. There, the weight function $w\ge0$ is intrinsically derived
from $P$ in terms of superharmonic functions such as the Green
function. This weight $w$ is shown to be optimal in three  aspects:
\begin{itemize}
  \item  For every $\tilde{w}\gneqq w$, the Hardy inequality fails (``Criticality'').

  \item  The  ground state is not an eigenfunction (``Null-criticality'').
    \item   For any $\lm>0$, the Hardy inequality outside of any compact set
   fails for the weight $(1+\lm)w$ (``Optimality near
   infinity'').
\end{itemize}
Such a weight $w$ is called an \emph{optimal Hardy-weight} (for a
rigorous  definition see Section~\ref{s:setup}). Using a different approach, the main results of \cite{DFP} were
later generalized to the $p$-Laplacian for $1<p<\infty$ \cite{DP}.

In the present paper we follow the approaches in \cite{DFP,DP} in the context of
Schr\"odinger operators on weighted graphs (with $p=2$). Similarly to \cite{DFP,DP}, we prove a Hardy inequality with an optimal Hardy weight under
the assumption of the existence of certain positive (super)harmonic functions. There are
two versions of the result, one for bounded positive (super)harmonic functions
and one  for unbounded positive (super)harmonic functions.

Let us present two special cases of our results in the context of
graphs with standard weights and bounded vertex degree. Let $X$ be
a countably infinite vertex set. We denote $x\sim y$ whenever two vertices $x,y$ are
connected by an edge in which case we call $x$ and $y$ {\em adjacent}. The
 {\em degree} $\deg(x)$ of a vertex $x\in X$ is the number of vertices
adjacent to $x$. A function $u:X\to\R$ is said to be {\em harmonic}  on $ W\subseteq X $ if
\begin{align*}
    \Delta u(x):=\sum_{y\sim x}(u(x)-u(y))=0 \qquad x\in W.
\end{align*}
Correspondingly, a function $ u $ is called \emph{superharmonic} on $ W $ if $ \Delta u(x)\ge 0 $
for all $ x \in W $.

Recall that a function $ u:X\to  \R $ is called \emph{proper} on $ W\subseteq X $
if $ u^{-1}(I) $ is compact, (i.e., finite) for all compact $ I\subseteq u(W):=\{u(x)\mid x\in W\} $.

\begin{thm}\label{t:graph1} Let a connected graph $X$ with bounded vertex degree and a finite set $ K\subseteq  X $ be given, and let $u:X\to (0,\infty)$ be an unbounded  positive function which is harmonic and proper
on $ X\setminus K $ and such that $ u=0 $ on $ K $.
Then the following Hardy-type inequality holds true
\begin{align*}
\sum_{x,y\in X,x\sim y}(\ph(x)-\ph(y))^{2}\ge\sum_{x\in
    X\setminus K}w(x)\ph(x)^{2}
\end{align*}
for all finitely supported functions $\ph$ with support in $ X\setminus K $,  where  the weight function $w$ is given by
\begin{align*}
    w(x):=\frac{1}{2 u(x)}\sum_{y\sim x}\left(
{u(x)}^{1/2}-{u(y)}^{1/2}
    \right)^{2} \qquad x\in X\setminus K.
\end{align*}
Moreover, $w$ is an optimal Hardy weight in $X\setminus K$.
\end{thm}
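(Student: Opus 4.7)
The plan is to reinterpret $w$ as the ``ground-state potential'' of $v:=u^{1/2}$ and then feed this into the graph criticality theory developed earlier in the paper. The key computation is that $w(x)=\Delta v(x)/v(x)$ on $X\setminus K$. Indeed, expanding $(v(x)-v(y))^2=u(x)+u(y)-2v(x)v(y)$ in the definition of $w$ and using harmonicity of $u$, which gives $\sum_{y\sim x}u(y)=\deg(x)u(x)$, one obtains
\[
2u(x)w(x)=2\deg(x)u(x)-2v(x)\sum_{y\sim x}v(y),
\]
whence $w(x)=\deg(x)-v(x)^{-1}\sum_{y\sim x}v(y)=\Delta v(x)/v(x)$. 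Thus $v$ is a strictly positive harmonic function for the Schr\"odinger operator $H:=\Delta-w$ on $X\setminus K$ and vanishes on $K$.

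Given a finitely supported $\ph$ with $\supp\ph\subseteq X\setminus K$, I would set $\psi:=\ph/v$ on $X\setminus K$ (and $\psi:=0$ on $K$) and apply a discrete ground-state transform: a direct summation by parts combined with $\Delta v=wv$ on $X\setminus K$ yields
\[
\sum_{x\sim y}(\ph(x)-\ph(y))^2-\sum_{x\in X\setminus K}w(x)\ph(x)^2=\sum_{x\sim y}v(x)v(y)(\psi(x)-\psi(y))^2\ge 0.
\]
This is exactly the asserted Hardy inequality; it is the discrete analogue of the Allegretto--Piepenbrink / Picone-type identity and fits into the criticality machinery set up earlier in the paper.

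For optimality I would invoke that same criticality theory, which reduces the three defining conditions to statements about $v$: \emph{criticality} becomes ``$v$ is a ground state of $H$ on $X\setminus K$'', \emph{null-criticality} becomes $\sum_{x}w(x)v(x)^2=\infty$, and \emph{optimality near infinity} becomes the existence, for every $\lm>1$ and every finite $F\supseteq K$, of a finitely supported test function in $X\setminus F$ violating the inequality for $\lm w$. The first two would follow from the properness and unboundedness of $u$: the sublevel sets $\{u\le t\}$ are finite, so summation by parts on them combined with $wv^2=v\Delta v$ and passing $t\to\infty$ forces the divergence $\sum wv^2=\infty$ and, via the same exhaustion, shows that $v$ has minimal growth at infinity and hence is the ground state. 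For the third point I would adapt the continuous construction of \cite{DFP}, taking logarithmic test functions $\ph_n:=\eta_n(\log u)\,v$ with cutoffs $\eta_n$ supported in increasingly distant intervals of $\log u$, so that the ratio of the Dirichlet form to $\sum w\ph_n^2$ converges to $1$ while the supports escape to infinity, which defeats any bound with $\lm>1$.

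The main obstacle is the optimality near infinity. On a graph the chain rule is only approximate, so for $x\sim y$ the quantity $\ph_n(x)-\ph_n(y)$ has to be written as $\eta'_n(\log u(x))(\log u(x)-\log u(y))v(x)$ plus error terms coming from discrete second differences of $\eta_n\circ\log u$ and of $v$. Controlling these errors and absorbing them into the $w$-side requires the explicit identity $w=\Delta v/v$, harmonicity of $u$ for resummation, and the bounded-degree hypothesis to keep the number of neighbors in each vertex-wise estimate under control; organizing the whole computation along the finite level sets of $u$ is what makes the bookkeeping feasible.
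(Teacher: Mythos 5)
Your first step is correct and matches the mechanism the paper uses: on $X\setminus K$ one has $w=\Delta v/v$ with $v:=u^{1/2}$ (this is the paper's chain rule for the square root, Corollary~\ref{c:squareroot}), and the ground-state-transform identity then gives the Hardy inequality for $\ph\in C_c(X\setminus K)$. The paper packages this differently -- it restricts $h$ to $C_c(X\setminus K)$, observes that the restricted operator is $\Delta+q$ with $q(x)=\#\{z\in K: z\sim x\}$ finitely supported, and then invokes Theorem~\ref{thm_optimal_H} with $u_0=u$, $v=1$ -- but the inequality itself is fine in your version. Note, however, that you never verify the hypothesis that makes everything downstream work, namely the neighbor-ratio bound $\sup_{x\sim y}u(x)/u(y)<\infty$; the paper derives it from bounded degree and superharmonicity (Lemma~\ref{lemma7}), and it is used in every quantitative estimate below.

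The genuine gap is in the optimality half, which is where all the substance of the theorem lies. (i) \emph{Criticality}: asserting that $v$ ``has minimal growth at infinity and hence is the ground state'' is a continuum heuristic, not a proof; the discrete criterion used here is the existence of a null-sequence, and the paper constructs one explicitly as $e_n=\ph_n\circ u$ with logarithmic cutoffs and controls $(h-w)_{v}(e_n)\to 0$ via the coarea formula. That formula is only usable because of the Stokes-type Lemma~\ref{l:co-area}, which shows the level-set flux $g(t)=\sum_{u(y)<t\le u(x)}b(x,y)(u(x)-u(y))$ is piecewise constant with $0<c\le g\le C$; this is where properness and the neighbor-ratio bound enter, and nothing in your sketch supplies it. (ii) \emph{Null-criticality}: the divergence $\sum_x w(x)v(x)^2=h(u^{1/2})=\infty$ does not follow from ``summation by parts on sublevel sets and passing $t\to\infty$''; it follows from the coarea formula with $f(t)=1/t$ together with the lower bound $g\ge c$ and the pointwise comparison $(\sqrt{b}-\sqrt{a})^2\ge \tfrac{a}{4b}(b-a)\int_a^b t^{-1}\,\mathrm{d}t$. (iii) \emph{Optimality near infinity}: you propose the direct test-function computation of \cite{DFP} and correctly identify that the discrete chain-rule errors are the obstacle -- but you do not resolve it, and the paper's whole point is that it avoids this route. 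Instead it argues by contradiction (Theorem~\ref{t:optimal} via Lemma~\ref{lem_62}): if $h\le C(h-w)$ on $C_c(X\setminus W)$, a null-sequence for $h-w$ lets one extend this inequality to $f=u^{1/2}1_{X\setminus W}$, for which the left-hand side is infinite (by null-criticality) while the right-hand side reduces to finitely many boundary terms. As it stands, your argument establishes the inequality but not that $w$ is an optimal Hardy weight.
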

In various cases it is hard to find explicit non-trivial harmonic
functions. However, for transient graphs there are plenty of
positive superharmonic functions in terms of the positive minimal Green function
\begin{align*}
    G(x,y):=\sum_{n=0}^{\infty}p_{n}(x,y)
\end{align*}
where $p_{n}(x,y)$ are the matrix elements of the $n$-th power of
the transition matrix given by the matrix elements
$p_{1}(x,y)=1/\deg(x)$, $x,y\in X$. The case when the sum converges
for all $x,y\in X$ is called transient. In this case, $u=G(o,\cdot)$
is known to be a strictly positive superharmonic and harmonic
outside of $o\in X$ (cf e.g. \cite{Woe-Book}). By the minimality of $G$ it follows that $\inf G(o,\cdot)=0$. We prove the following Hardy inequality.
\begin{thm}\label{t:graph2} Let a transient connected graph with bounded vertex degree
be given, and let $o\in X$ be fixed.
Let $G:X\to(0,\infty)$ be the positive minimal Green function with a pole at $o$, and assume that $G$ is proper.
Then the following Hardy-type inequality holds true
\begin{align*}
    \sum_{x,y\in X,\,x\sim y}(\ph(x)-\ph(y))^{2}\ge\sum_{x\in
    X}w(x)\ph(x)^{2}
\end{align*}
for all finitely supported functions $\ph$ on $X$,  where
\begin{align*}
    w(x):= \frac{\Gd \Big(G(x)^{{1}/{2}}\big)}{G(x)^{{1}/{2}}}
\end{align*}
 is an optimal Hardy weight in $X$, and for all $  x\neq o $
 \begin{align*}
 w(x) =
 \frac{1}{2G(x)}\sum_{y\sim x}\left(
 {G(x)}^{1/2}- {G(y)}^{1/2} \right)^{2}.
 \end{align*}
\end{thm}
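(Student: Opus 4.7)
The plan is to deduce Theorem~\ref{t:graph2} from the general optimal Hardy weight construction of this paper (the same machinery underlying Theorem~\ref{t:graph1}), applied now to the minimal Green function $u := G(o,\cdot)$. The function $u = G$ does not vanish on any set $K$ as in Theorem~\ref{t:graph1}; instead it is strictly positive on all of $X$, harmonic on $X\setminus\{o\}$, strictly superharmonic at $o$, proper by assumption, and satisfies $\inf_X G = 0$ by minimality of the Green function in the transient case. These are exactly the hypotheses required to produce $w = \Delta(G^{1/2})/G^{1/2}$ as an optimal Hardy weight on the entire graph $X$.

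The identification of $w$ with the squared-difference formula at $x \neq o$ is a short algebraic identity. Writing $a := G^{1/2}(x)$ and $b_y := G^{1/2}(y)$, harmonicity of $G$ at $x$ gives $\sum_{y\sim x} b_y^2 = \deg(x)\,a^2$, whence
\begin{align*}
\sum_{y\sim x}(a-b_y)^2 = 2\deg(x)\, a^2 - 2a\sum_{y\sim x} b_y = 2a\,\Delta(G^{1/2})(x),
\end{align*}
and dividing by $2G(x)$ yields the claimed expression. Non-negativity of $w$ on all of $X$, including at $o$, follows from concavity of the square root applied to the superharmonic $G$; strict positivity at $o$ is automatic because $\Delta G(o) > 0$.

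The Hardy inequality itself is obtained from the ground state transform $\ph = G^{1/2}\psi$, which splits the energy form into a non-negative ``hopping'' term in $\psi$ plus exactly $\sum_x w(x)\ph(x)^2$; this identity is pointwise and is valid wherever $G > 0$, so it applies on all of $X$. The three optimality assertions are then delivered by the general theorem: criticality of $\Delta - w$, with associated ground state $G^{1/2}$; null-criticality, i.e.\ $\sum_x w(x) G(x) = \infty$; and optimality near infinity, which uses properness of $G$ to exclude any improved weight $(1+\lm)w$ off a finite set.

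I expect null-criticality to be the principal obstacle. One must show that the discrete Dirichlet energy of $G^{1/2}$ is infinite, namely $\sum_x \sum_{y\sim x}\bigl(G^{1/2}(x)-G^{1/2}(y)\bigr)^{2} = \infty$. The natural strategy is a layer-cake argument: properness forces each level set $\{G\le t\}$ to be finite, while $\inf_X G = 0$ together with $G(o) > 0$ forces $G^{1/2}$ to traverse unboundedly many ``layers'' between $G^{1/2}(o)$ and $0$, each contributing a positive amount to the energy. Adapting the continuous telescoping argument of \cite{DFP} to the graph setting---controlling the discrete co-area-type contributions at boundaries between successive level sets, and accounting for the pole at $o$---is the delicate technical step.
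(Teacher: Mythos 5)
Your overall route is the paper's route: Theorem~\ref{t:graph2} is deduced by feeding the pair of superharmonic functions $\{1, G(o,\cdot)\}$ into the general construction of Theorem~\ref{thm_optimal_H} (the paper takes $u=1$, $v=G$, so $u_0=1/G$; your choice $u=G$, $v=1$ is equivalent by the remark that $u_0$ may be replaced by $1/u_0$). Your algebraic identification of $w(x)$ for $x\neq o$ and the ground-state-transform explanation of the Hardy inequality are both correct.

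There is, however, one hypothesis you never verify, and it is the only place where the bounded-vertex-degree assumption of Theorem~\ref{t:graph2} actually enters. Theorem~\ref{thm_optimal_H} requires not only that $u_0$ be proper but also the anti-oscillation condition (b), here $\sup_{x\sim y} G(x)/G(y)<\infty$. Your list of ``exactly the hypotheses required'' (positivity, harmonicity off $o$, properness, $\inf G=0$) omits this, and it is not automatic: it is supplied by Lemma~\ref{lemma7}, i.e.\ superharmonicity of $G$ at $y$ gives $G(x)\le\sum_{z\sim y}G(z)\le\deg(y)\,G(y)\le C\,G(y)$ for $x\sim y$. Without this bound the constants $C_0$ in the criticality and null-criticality proofs (Theorems~\ref{t:critical} and~\ref{t:nullcrit}) are not controlled. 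Conversely, the step you flag as ``the principal obstacle''---divergence of the Dirichlet energy of $G^{1/2}$ and the attendant layer-cake/coarea analysis---is not an obstacle in this deduction at all: it is precisely the content of Theorem~\ref{t:nullcrit} (via the coarea formula, Theorem~\ref{t:coarea}), which is already part of the general theorem you are invoking, so there is nothing left to adapt once hypotheses (a) and (b) are checked.
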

These theorems are special cases of Theorem~\ref{thm_optimal_H} and
Corollary~\ref{thm_optimal_H_bounded} which are the main results of
the paper. In Section~\ref{sec_pf_main} we show how they can be
derived from the main results.

As for the proofs of our theorems, we are faced with the challenge
that the two approaches of \cite{DFP,DP} both rely heavily on the chain rule.
However, it is well known that, in general, a chain rule is  not valid
in non-local settings such as graphs. The remedy is twofold. Firstly
it was observed in \cite{BHLLMY} that an analogue of the chain rule
holds for the square root. This is crucial since the square
root of a positive superharmonic function is superharmonic. Secondly, we
rely on a coarea formula that is inspired by the treatment in
\cite{DP} (see also \cite{DFP}). This is in line with the meta-strategy that local
estimates should be replaced by integrated estimates in the case of
graphs.

The paper is structured as follows. In the following section the basic
setting is introduced and the main results are stated. Then in Section~\ref{s:toolbox}, we provide the major tools for the proof which consists of a discrete chain rule, the ground state transform and a co-area formula which are backbone of the proof of our main result. In Sections~\ref{s:crit},~\ref{s:nullcrit} and~\ref{s:optimal}
the proofs of the criticality, null-criticality and optimality near infinity, of our
Hardy weights are respectively presented for the case of Laplace-type operators. In
Section~\ref{sec_pf_main} we deduce these results for Schr\"odinger
operators from the previously proven results using the ground state
transform. Finally, in Section~\ref{s:examples} we present some
first basic examples, where our results can be applied to concrete
graphs.

\section{Set up and main result}
\subsection{Graphs, formal Schr\"odinger operators and forms}\label{Subsection-Formal}

 A graph over an infinite discrete set  $X$ is a symmetric function $b:X\times
X\to[0,\infty)$ with zero diagonal such that
$$\sum_{y\in  X}b(x, y)<\infty \quad \textup{ for all } x\in X.$$
We call the elements of $X$  \emph{vertices}. We say that $x,y\in X$ are
\emph{adjacent} or \emph{neighbors} or \emph{connected by an edge}
if $b(x,y)>0$ in which case we write $x\sim y$.

Throughout the paper we assume that
$X$ is a countably infinite set equipped with the discrete topology and $ b $ is a  graph over $ X $. Furthermore we assume that $b$ is \emph{connected}, that is, for every $x$ and $y$ in $X$
there are $x_{0},\ldots,x_{n}$ in $X$ such that $x_{0}=x$, $x_{n}=y$
and $x_{i}\sim x_{i+1}$ for $i=0,\ldots,n-1$.

For $W\subseteq X$, we denote by $C(W)$ (resp., $C_c(W)$) the space
of real valued functions on $W$ (resp., with compact support in
$W$). By extending functions by zero on $X\setminus W$ the space
$C(W)$  will be considered as a subspace of  $C(X)$.

We say that $f\in C(W)$ is \emph{positive} in $W$ if  $f\geq 0$ and $f\neq 0$ in
$W$, in this case, we also use the notation $f\gneqq 0$.


Given a  graph $b$ over $X$, we introduce the associated \emph{formal
Laplacian} ${\L}=\L_{b}$ acting on the space
\begin{align*}
{\F}(X):=\{f\in C(X) \mid \sum_{y\in X}b(x,y)|f(y)|<\infty\mbox{ for
all } x\in X\},
\end{align*}
by
\begin{align*}
{\L} f(x):=\sum_{y\in X}b(x,y)(f(x)-f(y)).
\end{align*}
By the summability assumption on $b$ we have $\ell^\infty(X) \subset
F(X)$.

For a potential $q:X\to\mathbb{R}$, we define the \emph{formal
Schr\"odinger operator} ${\H}$ on $\F(X)$ by
\begin{align*}
{\H}:={\L}+q.
\end{align*}
The associated \emph{bilinear form} $h$ of $H$ on $C_{c}(X)\times C_{c}(X)$ is given by
\begin{align*}
h(\ph,\psi)\!:=\!\frac{1}{2}\sum_{x,y\in
X}\!\!\!b(x,y)(\ph(x)\!-\!\ph(y))(\psi(x)\!-\!\psi(y)) \!+\!\sum_{x\in
X}\!q(x)\ph(x)\psi(x).
\end{align*}
We denote by $h(\ph):=h(\ph,\ph)$ the induced quadratic form on $C_c(X)$, and write $h\ge0\mbox{ on }C_{c}(X)$ (or in short $h\geq 0$)
if $h(\ph)\ge0$ for all $\ph\in C_{c}(X)$.
\medskip

Any function $ w:X\to \R $ gives rise to a canonical quadratic form on $ C_{c}(X) $ which we denote (with a slight abuse of notation) by $w$. It acts as
\begin{align*}
w(\ph):=\sum_{x\in X}w(x)\ph(x)^{2}.
\end{align*}

We denote by
 $\ell^{2}(X)$ the Hilbert space of square summable functions equipped with the
 scalar product
 \begin{align*}
 \langle{f},g\rangle:=\sum_{X}f g=\sum_{x\in X}f(x)g(x) \qquad f,g \in \ell^{2}(X).
 \end{align*}

Finally, we introduce the notion of (super)harmonic functions on a graph.
\begin{defi}
	We say that a function $u$ is $H$-\emph{(super)harmonic} on
	$W\subseteq X$ if $u\in {\F}(X)$ and ${\H}u=0$, ($Hu\ge0$) on $W$.
	We write
	\begin{align*}
	{\H}\ge0 \quad \mbox{on } W
	\end{align*}
	if there exists a positive $H$-superharmonic function $u$ on $W$.
\end{defi}

\subsection{Critical Hardy-weights}\label{s:setup}
In this subsection we define the notions of criticality, subcriticality
and null-criticality that are fundamental for the present paper. These notions are discussed in detail in \cite{KePiPo1} to which we also refer for references.

\begin{defi}[Critical/subcritical]Let $h$ be a quadratic form associated with the formal
Schr\"odinger operator ${\H}$, such that $h\ge0$ on $C_{c}(X)$. The
form $h$ is called \emph{subcritical} in $X$ if there is a positive
$w\in C(X)$ such that $h-w\ge0$ on $C_{c}(X)$. Otherwise, the form
$h$ is called \emph{critical}  in $X$.
\end{defi}
In \cite[Theorem~5.3]{KePiPo1} a characterization of criticality is presented. There it is shown, that for a form $ h $ being critical is equivalent to the existence of a
unique positive $H$-superharmonic function $ v $ (which is in fact $H$-harmonic), and which is called the \emph{(Agmon) ground state} of $ h $. Furthermore, criticality is equivalent to existence of a
\emph{ null-sequence}, i.e. a sequence $ (e_{n}) $ in $ C_{c}(X) $ such that $0\leq e_{n} \le v$,
 $ e_{n}\to v $ pointwise and $ h(e_{n})\to 0 $ for $ n\to\infty $. (Here $ v $ is $H$-superharmonic which is in fact, the ground state).

\begin{defi}[Null-critical/positive-critical] Let $h$ be a quadratic form associated with the formal
Schr\"odinger operator ${\H}$, such that $h\ge0$ on $C_{c}(X)$. The
form $h$ is called \emph{null-critical}  (resp.,
\emph{positive-critical}) in $X$ with respect to a positive
potential $w$ if $h$ is critical  in $X$ and $\sum_X \psi^2 w
=\infty$ (resp., $\sum_X \psi^2 w <\infty$), where $\psi$ is the
ground state of $h$ in $X$.
\end{defi}
Note that the null/positive-criticality of a critical form
depends also on the weight $w$. By \cite[Theorem~6.2]{KePiPo1}, the form $ h-w $ is null-critical with respect to $ w $ if and only if the ground state $ \psi $ of the critical form  $h-w$ cannot be approximated by compactly supported functions, i.e., by convergence with respect to $ h-w $ and pointwise convergence.

Finally, we define the optimality criterion for Hardy weights we are interested in this paper.
\begin{defi}[Optimal Hardy-weight] We say that a positive function $w:X\to[0,\infty)$ is an
\emph{optimal Hardy-weight} for $h$ in $X$ if
\begin{itemize}
  \item $h-w$ is critical in $X$,
  \item $h-w$ is null-critical with respect to $w$ in $X$,
  \item  $h-w\ge\lm w$ fails to hold on $C_{c}(X\setminus W)$ for all
  $\lm>0$ and all finite $W\subseteq X$. In this case, we say that $w$ is \emph{optimal near infinity for $h$}.
\end{itemize}
\end{defi}
\subsection{The main results}
The following theorem is the main result of our paper.

\begin{thm}\label{thm_optimal_H} Let $b$ be a  connected graph over $X$,
and let $q$ be a given potential.  Let $u$ and $v$ be positive
$\H$-superharmonic functions that are $\H$-harmonic outside of a
finite set. Let $u_{0}:=u/v$, and assume that 
\begin{itemize}
  \item [(a)] $u_0:X\to (0,\infty)$ is proper.
\item  [(b)] $\displaystyle{\sup_{\substack{x,y\in X \\ x\sim y}}\frac{u_0(x)}{u_0(y)}}<\infty$.
\end{itemize}
Then the function
\begin{align*}
    w:=\frac{{\H}\left[(uv)^{1/2}\right]}{(uv)^{1/2}}
\end{align*}
is an optimal Hardy-weight in $X$, and
\begin{align*}
    w(x)=
   \frac{1}{2}\sum_{y\in X}b(x,y)
    \left[ \left(\frac{u(y)}{u(x)}\right)^{1/2}
    -\left(\frac{v(y)}{v(x)}\right)^{1/2}\right]^{2}
\end{align*}
for all $x\in X$ satisfying $\H u(x)=\H v(x)=0$.
\end{thm}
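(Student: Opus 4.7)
The plan is to use the ground-state transform with respect to $v$ to reduce the Schr\"odinger statement to a purely Laplace-type statement. Since $v$ is a positive $H$-superharmonic function, conjugating $h$ by $v$ converts it into the quadratic form of a Laplace-type operator $\widetilde L$ on the reweighted graph $(X,\widetilde b)$ with $\widetilde b(x,y):=b(x,y)v(x)v(y)$, plus a nonnegative ``remainder'' potential supported where $\H v>0$ (a finite set by hypothesis). Under this transform, the function $u_0=u/v$ becomes a positive function on $(X,\widetilde b)$ which is $\widetilde L$-harmonic off a finite set, proper by hypothesis~(a), and whose edge-oscillation is controlled by~(b). Theorem~\ref{thm_optimal_H} is thereby equivalent to the Laplace-case statement that $\widetilde w:=\widetilde L(u_0^{1/2})/u_0^{1/2}$ is an optimal Hardy-weight for $\widetilde L$, which is what Sections~\ref{s:crit}--\ref{s:optimal} establish; Section~\ref{sec_pf_main} then transports criticality, null-criticality, and optimality near infinity back to $h-w$ via the isometry of the transform.

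For the Laplace case itself, the three pillars are (i)~the discrete square-root chain rule from~\cite{BHLLMY}, which already shows that $u_0^{1/2}$ is $\widetilde L$-superharmonic and yields a pointwise identity comparing $\widetilde L(u_0^{1/2})\cdot u_0^{1/2}$ to a sum of squared differences of $u_0^{1/2}$ over edges; (ii)~the ground-state transform, now applied with reference function $u_0^{1/2}$, which rewrites $(\widetilde L-\widetilde w)\varphi$ as a nonnegative form; and (iii)~a coarea-type identity along the level sets $\{u_0\le t\}$, which are finite by~(a), allowing the discrete quadratic form to be estimated against a one-dimensional Hardy inequality on $(0,\infty)$.

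I would then treat the three optimality properties in turn. \emph{Criticality} of $\widetilde L-\widetilde w$ is obtained by building a null sequence $(e_n)$ by truncating $u_0^{1/2}$ along the finite level sets $\{u_0\le n\}$; this proves $h-w\ge 0$ and identifies $u_0^{1/2}$ as the ground state via \cite[Theorem~5.3]{KePiPo1}. \emph{Null-criticality} with respect to $\widetilde w$ reduces, via the coarea identity, to the divergence $\sum_n 1/n=\infty$, since the integrated form of $\sum_x u_0(x)\widetilde w(x)$ along the level sets reproduces this harmonic series up to errors controlled by~(b). \emph{Optimality near infinity} follows by running the same coarea argument on $X\setminus W$ for any finite $W$: the classical Hardy constant on $(0,\infty)$ is attained by our choice of $u_0^{1/2}$, so no improvement $(1+\lambda)\widetilde w$ can survive this one-dimensional bound. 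Finally, the explicit formula for $w$ at vertices with $\H u=\H v=0$ is a direct calculation using the algebraic identity
\[
2\bigl(1-\alpha\beta\bigr)=(1-\alpha^2)+(1-\beta^2)+(\alpha-\beta)^2,
\]
applied with $\alpha=(u(y)/u(x))^{1/2}$, $\beta=(v(y)/v(x))^{1/2}$ inside $\sum_y b(x,y)\bigl[(uv)^{1/2}(x)-(uv)^{1/2}(y)\bigr]$; the first two pieces combine with the identities $Lu(x)=-q(x)u(x)$ and $Lv(x)=-q(x)v(x)$ to cancel the potential term in $H$ exactly, leaving precisely the claimed quadratic expression.

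The main obstacle I anticipate is the null-criticality step. In the continuous setting of~\cite{DFP,DP} it is an almost immediate consequence of the coarea formula and integration by parts along level curves; in the discrete setting, the absence of a pointwise chain rule means the coarea identity must be replaced by an \emph{integrated} rearrangement that tracks edges crossing each level, and one must argue carefully that the resulting errors are summable and do not destroy the divergence of the harmonic series. It is exactly at this stage that hypothesis~(b) on the edge-oscillation of $u_0$ becomes indispensable, both to bound the ``boundary between levels'' contributions and to ensure that the discrete version of the sharp continuous Hardy constant is recovered cleanly.
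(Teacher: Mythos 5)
Your overall architecture matches the paper's: reduce via the ground-state transform in $v$ to a Laplace-type operator with finitely supported nonnegative potential $q'=v(\H v)$ on the reweighted graph $b'(x,y)=b(x,y)v(x)v(y)$, then verify criticality, null-criticality and optimality near infinity for $u_0=u/v$ using the square-root chain rule and the coarea formula. Your derivation of the explicit formula from the identity $2(1-\alpha\beta)=(1-\alpha^2)+(1-\beta^2)+(\alpha-\beta)^2$ is correct and is equivalent to Lemma~\ref{l:squareroot}. However, two of the three verification steps are not actually carried out. For criticality, ``truncating $u_0^{1/2}$ along the finite level sets $\{u_0\le n\}$'' does not by itself produce a null sequence: a hard level-set truncation has energy bounded away from zero. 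What makes Theorem~\ref{t:critical} work is the specific logarithmic profile $\varphi_n$ (equal to $1$ on $[1/n,n]$, interpolating like $\log t/\log n$ on $[1/n^2,1/n]$ and $[n,n^2]$), whose energy, after the coarea formula with $f(t)=t\varphi_n'(t)^2$, is $O(1/\log n)$. You must specify the cutoff and explain why its energy vanishes; this is the whole content of the criticality step.

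The more serious gap is optimality near infinity. Your argument is that ``the classical Hardy constant on $(0,\infty)$ is attained by $u_0^{1/2}$, so no improvement $(1+\lambda)\widetilde w$ can survive,'' but this is an assertion, not a mechanism. The inequality to be contradicted, $h-(1+\lambda)w\ge 0$, is assumed only on $C_c(X\setminus W)$, while the function $u_0^{1/2}$ that witnesses the divergence $h(u_0^{1/2})=\infty$ is not compactly supported and hence not an admissible test function. The paper's resolution (Lemma~\ref{lem_62}) is to exploit the already-established criticality of $h-w$: its null sequence together with Fatou's lemma extends the assumed inequality $h\le\tfrac{1+\lambda}{\lambda}(h-w)$ from $C_c(X\setminus W)$ to all of $C(X\setminus W)$, after which testing with $f=u^{1/2}1_{X\setminus W}$ yields a contradiction, since the left-hand side is infinite by the null-criticality computation while the right-hand side reduces, because $u^{1/2}$ is $(\H-w)$-harmonic, to a finite boundary sum over edges joining $W$ to $X\setminus W$. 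Without this extension step --- or, alternatively, an explicit family of functions in $C_c(X\setminus W)$ whose Rayleigh quotients of $h$ against $w$ tend to $1$ --- no contradiction materializes; and this is exactly the place where the absence of a pointwise chain rule and the nonlocality of the graph setting bite hardest.
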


\begin{rem} Let us discuss  assumptions (a) and (b)  on the
function $u_{0}$:

The properness assumption (a) says that $u_{0}^{-1}(I)$ is a finite
set for any compact $I\subseteq u_{0}(X)$. Since $ u_{0}(X)
\subseteq (0,\infty) $ for positive superharmonic functions by the
Harnack inequality, \cite[Lemma~4.5]{KePiPo1}, the assumption
implies that $0$ and $\infty$ are the only (possible) accumulation
points of the range of $u_{0}$  (and without loss of generality we
may assume $\sup u_{0}=\infty$, since otherwise, we can only replace
$u_0$ with $\widetilde{u}_0:=1/u_0$). For example this can be
achieved if the limit of $u_{0}$ in the one-point compactification
$X\cup \{\infty\}$ of $X$ towards $\infty$ is  $\sup u_{0}=\infty$.

The second assumption bounds the quotient of  the function $u_0$ on
neighboring vertices. It can be understood as an anti-oscillation
assumption guaranteeing that the values of $u_{0}$ at neighbors do not
oscillate over the whole range of $u_0$. As above, it is also
satisfied if the limit of $u_{0}$ towards $\infty$ (in the one-point
compactification of $X$) exists and is either $0$ or $\infty$.
Another case when this is automatically satisfied is if the weighted
degree function $x\mapsto \sum_{y}b(x,y)$ is bounded on $X$ and $ \inf_{x\sim y}b(x,y)>0 $.
\end{rem}

Theorem~\ref{thm_optimal_H} uses the so called \emph{supersolution
construction} (see \cite{DFP})  with
the function $(uv)^{{1}/{2}}$ in the case where $u_0$, the quotient of the
supersolution $u$ and $v$, is a proper map. The corollary below applies for the case  when the quotient is
bounded. It uses the a similar supersolution construction that was
developed in \cite{DP} for the $p$-Laplacian.

\begin{cor}\label{thm_optimal_H_bounded}
Let $b$ be a  connected graph over $X$, and let $q$ be a given potential. Let
Let $u$ and $v$ be positive functions such  that $u$ and $v-u$ are
positive $H$-superharmonic functions on $X$ that are $\H$-harmonic outside of
a finite set. Let $u_{0}:=u/v$ and assume
\begin{itemize}
  \item  [(a)] The map $ u_{0}:X\to (0,1)$ is proper and satisfies $\sup u_{0}=1$.

  \medskip

\item  [(b)] $\displaystyle
{\sup_{\substack{x,y\in X \\ x\sim y}}
\frac{u_0(x)(1-u_{0}(y))}{u_0(y)(1-u_{0}(x))}}<\infty$.
\end{itemize}
Then the function
\begin{align*}
  w:=\frac{{\H}\left[u^{1/2}(v-u)^{1/2}\right]}
    {u^{1/2}(v-u)^{1/2}}
\end{align*}
is an optimal Hardy-weight in $X$, and
\begin{align*}
    w(x)=
   \frac{1}{2}\sum_{y\in X}b(x,y)
    \left[ \left(\frac{u(y)}{u(x)}\right)^{1/2}
    -\left(\frac{(v-u)(y)}{(v-u)(x)}\right)^{1/2}\right]^{2}
\end{align*}
for all $x\in X$ satisfying  $\H u(x)=\H v(x)=0$.
\end{cor}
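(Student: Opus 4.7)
The plan is to deduce the corollary directly from Theorem~\ref{thm_optimal_H} by the substitution $\tilde u := u$ and $\tilde v := v - u$. Both $\tilde u$ and $\tilde v$ are, by hypothesis, positive $\H$-superharmonic functions that are $\H$-harmonic outside of a finite set, so they are legitimate inputs to the theorem. Under this substitution, the expression $(\tilde u \tilde v)^{1/2}$ becomes $u^{1/2}(v-u)^{1/2}$, so the Hardy-weight produced by the theorem is precisely the $w$ in the statement of the corollary. Likewise, at any $x$ with $\H u(x) = \H v(x) = 0$ we have $\H\tilde u(x) = \H\tilde v(x) = 0$ by linearity, and the pointwise formula for $w$ in Theorem~\ref{thm_optimal_H} translates verbatim into the claimed formula.

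The main thing to check is that the new quotient $\tilde u_0 := \tilde u/\tilde v = u/(v-u)$ satisfies the hypotheses (a) and (b) of Theorem~\ref{thm_optimal_H}. The key observation is the algebraic identity
\begin{equation*}
\tilde u_0 \;=\; \frac{u_0}{1-u_0}, \qquad u_0 = u/v,
\end{equation*}
where $\phi\colon (0,1)\to (0,\infty)$, $\phi(t):=t/(1-t)$, is a homeomorphism. Hence $\tilde u_0 = \phi\circ u_0$ takes values in $(0,\infty)$.

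For (a), properness is preserved under composition with a homeomorphism: if $I\subseteq\tilde u_0(X)\subseteq(0,\infty)$ is compact then $\phi^{-1}(I)\subseteq(0,1)$ is compact and contained in $u_0(X)\subseteq(0,1)$, so $\tilde u_0^{-1}(I) = u_0^{-1}(\phi^{-1}(I))$ is finite by the properness of $u_0$. For (b), a direct computation shows
\begin{equation*}
\frac{\tilde u_0(x)}{\tilde u_0(y)} \;=\; \frac{u(x)\,(v(y)-u(y))}{u(y)\,(v(x)-u(x))} \;=\; \frac{u_0(x)\,(1-u_0(y))}{u_0(y)\,(1-u_0(x))},
\end{equation*}
so the anti-oscillation bound (b) of the corollary is exactly the condition required to apply Theorem~\ref{thm_optimal_H} to the pair $(\tilde u,\tilde v)$.

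There is no real obstacle here: once the substitution is identified, both the Hardy-weight formula and the two hypotheses match up after the elementary change of variables $t\mapsto t/(1-t)$. The only thing that deserves a line of care is ensuring that $v-u\in\F(X)$ (immediate from $u,v\in\F(X)$) and that the positivity $v-u>0$ on $X$ holds (given in the statement), so that the quantities $u^{1/2}(v-u)^{1/2}$ and $\tilde u_0$ are well-defined everywhere on $X$.
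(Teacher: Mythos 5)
Your proposal is correct and follows essentially the same route as the paper: apply Theorem~\ref{thm_optimal_H} to the pair $(u,\,v-u)$, observe that the new quotient equals $u_0/(1-u_0)$, and verify properness and the anti-oscillation bound via the change of variables $t\mapsto t/(1-t)$. Your explicit homeomorphism argument for properness and the displayed computation for hypothesis (b) match the paper's verification step for step.
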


\section{The toolbox}\label{s:toolbox}
In this section we discuss the three major tools needed for the proof of the main theorem. The first is a discrete chain rule for the square root which is the basis for the supersolution construction of the Hardy weight. Secondly, we briefly recall the ground state transform which allows us to deal with Schr\"odinger operators instead of Laplacians only. Finally, we prove a co-area formula for Laplace type operators.

Throughout the section we are given a graph $b$ over a
discrete set $X$ and a  potential $q$  such that the associated form
satisfies $h\ge0$ on $C_{c}(X)$. Let $H=L+q$ be the corresponding
Schr\"odinger operator on $F(X)$ with the graph Laplacian $L=L_{b}$.
\subsection{Product and chain rules}
We present the product rule for the discrete Laplacian and a
discrete version of the chain rule for the square root.
\begin{lemma}[Product rule]
Let $f,g\in {\F}(X)$. Then for all $ x\in X $,
\begin{equation*}
    {\H}(f g)(x) =(f{\H}g)(x)+(g{\L}f)(x)
    -\sum_{y\in X}b(x,y)(f(x)-f(y))(g(x)-g(y)).
\end{equation*}
\end{lemma}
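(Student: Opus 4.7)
The plan is to verify the identity by a direct algebraic expansion, following the standard derivation of a product (Leibniz) rule for a difference operator. Since $H = L + q$ and the potential contributes the common term $q(x)f(x)g(x)$ on both sides, it suffices to prove the Laplacian version:
\[
L(fg)(x) = f(x)\,Lg(x) + g(x)\,Lf(x) - \sum_{y\in X} b(x,y)\bigl(f(x)-f(y)\bigr)\bigl(g(x)-g(y)\bigr).
\]

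First I would unfold the definition of $L$ applied to the product, writing
\[
L(fg)(x) = \sum_{y\in X} b(x,y)\bigl(f(x)g(x) - f(y)g(y)\bigr),
\]
and observe the pointwise polarization identity
\[
f(x)g(x) - f(y)g(y) = f(x)\bigl(g(x)-g(y)\bigr) + g(x)\bigl(f(x)-f(y)\bigr) - \bigl(f(x)-f(y)\bigr)\bigl(g(x)-g(y)\bigr),
\]
which one checks by expanding the last product. Multiplying by $b(x,y)$ and summing in $y$ then splits the right-hand side into exactly the three sums appearing in the lemma, since $f(x)$ and $g(x)$ are constant in $y$ and can be pulled out. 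Finally I would add back the potential term $q(x)f(x)g(x)$ on both sides to recover $H$ in place of $L$.

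The only subtlety is justifying the manipulations with the infinite sum: one must check that the split is legal, i.e.\ that each of the three rearranged sums converges absolutely so that Fubini/rearrangement applies. This is immediate from $f,g \in F(X)$, which gives absolute summability of $b(x,y)f(y)$ and $b(x,y)g(y)$ in $y$; the cross term $\sum_y b(x,y)\bigl(f(x)-f(y)\bigr)\bigl(g(x)-g(y)\bigr)$ is then the difference of four absolutely summable series. I would include a brief remark to this effect, but otherwise the proof is a one-line computation with no real obstacle — the lemma is essentially a bookkeeping identity laying the ground for the chain rule and the ground state transform to follow.
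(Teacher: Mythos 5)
Your proof is correct and is precisely the ``straightforward calculation'' that the paper leaves to the reader: expand $\L(fg)(x)$, apply the pointwise polarization identity $f(x)g(x)-f(y)g(y)=f(x)(g(x)-g(y))+g(x)(f(x)-f(y))-(f(x)-f(y))(g(x)-g(y))$, sum against $b(x,y)$, and absorb the potential term. One small caveat: absolute summability of the fourth piece $\sum_{y}b(x,y)f(y)g(y)$ is not ``immediate from $f,g\in \F(X)$'' (two functions in $\F(X)$ need not have their product in $\F(X)$); it is exactly the condition $fg\in \F(X)$ that is already implicitly required for the left-hand side ${\H}(fg)(x)$ to be defined, so the identity should be read under that tacit hypothesis, and your argument is otherwise complete.
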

\begin{proof}
This is a straightforward calculation.
\end{proof}
In general there is no chain rule for the discrete setting. However,
for the square root it was noticed by \cite{BHLLMY}
 that a chain
rule holds. This observation is a crucial point for the analysis in
this paper. Specifically, we use it to show that the square root of
a product of positive superharmonic functions is superharmonic.
\begin{lemma}[Chain rule for the square root]\label{l:squareroot}
Let $f,g\in {\F}(X)$ be  positive functions. Then for all $ x\in X $,
\begin{align*}
        2(f g)^{\frac{1}{2}}{\H}
        &\left[(f g)^{\frac{1}{2}}\right](x)=\left(f{\H}g\right)(x)
+\left(g{\H}f\right)(x)\\
    &+\!\sum_{y\in X}\!b(x,y)\!
    \left[g(x)^{\frac{1}{2}}
    \left({{f(x)^{\frac{1}{2}}}\!-\!{f(y)}^{\frac{1}{2}}}\right)\!-\!
f(x)^{\frac{1}{2}}
    \left({{g(x)^{\frac{1}{2}}}\!-\!{g(y)}^{\frac{1}{2}}}\right)
    \right]^{2}\!\!.
\end{align*}
\end{lemma}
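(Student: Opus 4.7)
The plan is to reduce everything to the product rule (the previous lemma) plus a short algebraic manipulation, using the elementary identity
\begin{equation*}
2a(a-b) = a^{2} - b^{2} + (a-b)^{2}
\end{equation*}
which is the discrete substitute for $2a\, da = d(a^{2})$. The potential $q$ will play no essential role since all the nontrivial content is in the Laplacian $L$; I will handle $q$ separately at the end.

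First I would write $F := f^{1/2}$, $G := g^{1/2}$, and apply the identity above with $a = F(x)G(x)$, $b = F(y)G(y)$ inside the sum defining $L[(fg)^{1/2}](x)$. Multiplying by $2(fg)^{1/2}(x)$ and summing against $b(x,y)$, this gives
\begin{equation*}
2(fg)^{1/2}(x)\, L\!\left[(fg)^{1/2}\right](x) = L[fg](x) + \sum_{y \in X} b(x,y)\bigl(F(x)G(x) - F(y)G(y)\bigr)^{2}.
\end{equation*}
Next I would invoke the product rule from the previous lemma to rewrite $L[fg](x)$ as $f(x)Lg(x) + g(x)Lf(x)$ minus the cross-gradient term $\sum_{y} b(x,y)(f(x)-f(y))(g(x)-g(y))$.

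The passage from $L$ to $H$ is essentially cosmetic: adding $2q(x)f(x)g(x)$ to both sides converts $2(fg)^{1/2} L[(fg)^{1/2}]$ into $2(fg)^{1/2} H[(fg)^{1/2}]$ on the left, and simultaneously converts $fLg + gLf$ into $fHg + gHf$ on the right, since $fHg + gHf = fLg + gLf + 2qfg$. Hence
\begin{equation*}
2(fg)^{1/2} H\!\left[(fg)^{1/2}\right](x) = f(x)Hg(x) + g(x)Hf(x) + \sum_{y} b(x,y)\,R(x,y),
\end{equation*}
where $R(x,y) := (F(x)G(x) - F(y)G(y))^{2} - (f(x)-f(y))(g(x)-g(y))$.

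The main (and only real) obstacle is showing that $R(x,y)$ equals the claimed square. Setting $a = F(x)$, $b = F(y)$, $c = G(x)$, $d = G(y)$, a direct expansion yields
\begin{equation*}
R(x,y) = (ac - bd)^{2} - (a^{2} - b^{2})(c^{2} - d^{2}) = (ad - bc)^{2} = \bigl[c(a-b) - a(c-d)\bigr]^{2},
\end{equation*}
which is exactly $\bigl[g(x)^{1/2}(f(x)^{1/2} - f(y)^{1/2}) - f(x)^{1/2}(g(x)^{1/2} - g(y)^{1/2})\bigr]^{2}$. Substituting this back finishes the proof; the positivity of $f,g$ is only needed so that the square roots make sense, and the summability hypothesis $f,g \in F(X)$ ensures that each sum over $y$ converges absolutely so the rearrangements above are legitimate.
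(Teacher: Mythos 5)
Your argument is essentially the paper's, run in the opposite order: the paper applies the product rule to the pair $f^{1/2},g^{1/2}$ and then uses the identity $2a(a-b)=a^{2}-b^{2}+(a-b)^{2}$ on each factor separately, whereas you apply that identity to $(fg)^{1/2}$ as a whole and then the product rule to the pair $f,g$. Your Lagrange-type computation $(ac-bd)^{2}-(a^{2}-b^{2})(c^{2}-d^{2})=(ad-bc)^{2}$ is correct and is a clean way to see where the square comes from; the paper instead recognizes $f(x)(\nabla g^{1/2})^{2}+g(x)(\nabla f^{1/2})^{2}-2(fg)^{1/2}(x)\nabla f^{1/2}\nabla g^{1/2}$ as the same perfect square. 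The treatment of $q$ is fine.

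There is, however, one genuine problem with the order you chose: your two intermediate displays involve $L[fg](x)$ and the cross-gradient sum $\sum_{y}b(x,y)(f(x)-f(y))(g(x)-g(y))$, and these need not be defined. The hypothesis $f,g\in F(X)$ does \emph{not} imply $fg\in F(X)$: on a star with center $x_{0}$, weights $b(x_{0},y_{n})=2^{-n}$ and $f(y_{n})=g(y_{n})=2^{n/2}$, one has $\sum_{n}b(x_{0},y_{n})f(y_{n})<\infty$ but $\sum_{n}b(x_{0},y_{n})f(y_{n})g(y_{n})=\infty$, so after your splitting both of these quantities are of the form $\infty-\infty$. Your closing claim that $f,g\in F(X)$ makes ``each sum over $y$ converge absolutely'' is therefore false for precisely these sums. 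The paper's ordering avoids the issue because $f^{1/2}\le(1+f)/2$ and $(fg)^{1/2}\le(f+g)/2$ place $f^{1/2},g^{1/2},(fg)^{1/2}$ in $F(X)$ automatically. The fix for your version is easy but must be said: keep everything under a single sum over $y$, i.e.\ verify the pointwise identity
\begin{equation*}
2(fg)^{1/2}(x)\bigl((fg)^{1/2}(x)-(fg)^{1/2}(y)\bigr)=f(x)(g(x)-g(y))+g(x)(f(x)-f(y))+(ad-bc)^{2}
\end{equation*}
with your $a,b,c,d$, and only then sum against $b(x,y)$; each of the three resulting sums converges absolutely since $f,g\in F(X)$ and $(ad-bc)^{2}\le 2\bigl(f(x)g(y)+f(y)g(x)\bigr)$.
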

\begin{proof}
To shorten notation we write $\nabla f=(f(x)-f(y))$. Then,
\begin{align*}
2({f}^{1/2}{\L}f^{1/2})(x)&
    \!=\!\!\sum_{y\in X}\! b(x,y)\!\left(\!{{f(x)}-f(y)+f(x)-2(f(x){f(y)})^{1/2}+f(y)}\!\right)\\
    &={\L f(x)}
    +\sum_{y\in
    X}b(x,y)\left(\nabla f^{1/2}\right)^{2}.
\end{align*}
We use the product rule
\begin{multline*}
2(f g)^{\frac{1}{2}}{\H}(f g)^{\frac{1}{2}}(x)
=2(f g)^{\frac{1}{2}}\left ({\L}(f g)^{\frac{1}{2}}(x)+q(f g)^{\frac{1}{2}}(x)\right)\\
=\!2(fg^{\frac{1}{2}}{\L}g^{\frac{1}{2}})(x)\!+\!2(gf^{\frac{1}{2}
}{\L}f^{\frac{1}{2}})(x) \!+\! 2(q f g)(x) \!-\! 2(f g)^{\frac{1}{2}}\!\!\sum_{x,y\in X}
b(x,y)\nabla f^{\frac{1}{2}}\nabla
g^{\frac{1}{2}}.
\end{multline*}
By the equality above we obtain
\begin{align*}
\ldots=&\left(f{\H}g\right)(x)
+\left(g{\H}f\right)(x)\\
&+\sum_{y\in X}
b(x,y)\left(f(\nabla
g^{\frac{1}{2}})^{2} +
g(\nabla
f^{\frac{1}{2}})^{2}-2(fg)^{\frac{1}{2}}\nabla f^{\frac{1}{2}}\nabla
g^{\frac{1}{2}}\right).\qedhere
\end{align*}
\end{proof}
\begin{cor}\label{c:squareroot}
 If $f,g\!\in \!\F(X)$ are positive $\H$-superharmonic (resp., $H$-harmonic) functions, then the function
$(f g)^{{1}/{2}}$ is $\H$-superharmonic (resp., $\H$-harmonic), and
\begin{align*}
\frac{\H[(fg)^{1/2}]}{(fg)^{1/2}}(x) =\frac{1}{2}\sum_{y\in X}b(x,y)\left[
    \left(\frac{f(y)}{f(x)}\right)^{1/2}-
    \left(\frac{{g(y)}}{g(x)}\right)^{1/2}
\right]^{2}
\end{align*}
for all $x$ satisfying  $\H f(x)=\H g(x)=0$.

In particular, if
$f$ is positive $L$-superharmonic (resp., $L$-harmonic), then $f^{{1}/{2}}$ is
$L$-superharmonic (resp., $L$-harmonic),{ and
\begin{align*}
\frac{\L[f^{1/2}]}{f^{1/2}}(x)=\frac{1}{2f(x)}\sum_{y\in X}b(x,y)
    \left(f(x)^{1/2}-f(y)^{1/2}\right)^{2}
\end{align*}
 for all $x$ satisfying  $\L f(x)=0$.}
\end{cor}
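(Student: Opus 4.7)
The plan is to read everything off directly from Lemma~\ref{l:squareroot}. For the first assertion, I would apply that identity to the pair $f, g$: its right-hand side equals $(f\H g)(x) + (g \H f)(x)$ plus a nonnegative sum of squares. When $f$ and $g$ are positive $\H$-superharmonic, both $\H f$ and $\H g$ are pointwise nonnegative, so every term on the right is $\geq 0$. Dividing by the strictly positive quantity $2(fg)^{1/2}(x)$ yields $\H[(fg)^{1/2}](x) \geq 0$, proving superharmonicity. If instead $f, g$ are $\H$-harmonic, the same argument applies with equalities in place of inequalities, giving $\H$-harmonicity of $(fg)^{1/2}$.

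At a point $x$ with $\H f(x) = \H g(x) = 0$, only the sum of squares in Lemma~\ref{l:squareroot} survives. I would expand the bracket
\[
g(x)^{1/2}\bigl(f(x)^{1/2} - f(y)^{1/2}\bigr) - f(x)^{1/2}\bigl(g(x)^{1/2} - g(y)^{1/2}\bigr),
\]
observing that the two copies of $f(x)^{1/2} g(x)^{1/2}$ cancel, so the bracket collapses to $f(x)^{1/2} g(y)^{1/2} - g(x)^{1/2} f(y)^{1/2}$. Dividing both sides of the resulting identity by $2(fg)(x)$, the left-hand side becomes $\H[(fg)^{1/2}](x)/(fg)^{1/2}(x)$ and on the right the factor $1/(f(x)g(x))$ folds into each square, producing exactly the displayed ratio-form identity.

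For the specialized statement about $\L$ and $f^{1/2}$, the plan is to apply the first part with $g \equiv 1$. The constant function $1$ lies in $\F(X)$ by the standing summability assumption on $b$, and $\L 1 \equiv 0$, so $1$ is $\L$-harmonic; this is precisely why the specialization is formulated for $\L$ rather than for a general $\H$, since $\H 1 = q$ need not vanish. Substituting $g(y)/g(x) \equiv 1$ into the formula just proved and clearing a factor of $f(x)$ from the denominator yields the claimed expression. I do not foresee any genuine obstacle; the entire argument is pattern-matching against the chain rule identity, with the one substantive calculation being the algebraic cancellation in the bracket.
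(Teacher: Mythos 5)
Your proposal is correct and follows essentially the same route as the paper: apply Lemma~\ref{l:squareroot}, read off (super)harmonicity from the sign of $(f\H g)+(g\H f)$ plus the nonnegative sum of squares, simplify the bracket at points where $\H f=\H g=0$ (your expand-and-cancel is algebraically identical to the paper's distribution of $(f(x)g(x))^{-1/2}$ over the two terms), and specialize to $g\equiv 1$ for the $\L$ statement.
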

\begin{proof}
We calculate $H[(f g)^{{1}/{2}}]$ using the chain rule for the square root (Lemma~\ref{l:squareroot}) to obtain the super-harmonicity and the explicit expression for $w$ at points
where $f$ and $g$ are $\H$-harmonic. Indeed, $\frac{\H[(fg)^{1/2}]}{(fg)^{1/2}} (x) $
\begin{align*}
= \sum_{y\in X}&\frac{b(x,y)}{2(fg)(x)}
    \left[g(x)^{\frac{1}{2}}
    \left({{f(x)^{\frac{1}{2}}}\!-\!{f(y)}^{\frac{1}{2}}}\right)\!-\!
f(x)^{\frac{1}{2}}
    \left({{g(x)^{\frac{1}{2}}}\!-\!{g(y)}^{\frac{1}{2}}}\right)
    \right]^{2}
\\
&=\frac{1}{2}\sum_{y\in X}{b(x,y)}
    \!\!\left[{f(x)}^{-\frac{1}{2}}
    \left({{f(x)^{\frac{1}{2}}}\!-\!{f(y)}^{\frac{1}{2}}}\right)\!-\!
{g(x)}^{-\frac{1}{2}}
    \left({{g(x)^{\frac{1}{2}}}\!-\!{g(y)}^{\frac{1}{2}}}\right)
    \right]^{2}\\
&=\frac{1}{2}\sum_{y\in X}b(x,y)\left[
    \left(\frac{f(y)}{f(x)}\right)^{\frac{1}{2}}-
    \left(\frac{g(y)}{g(x)}\right)^{\frac{1}{2}}
\right]^{2}.
\end{align*}
The statement for $L$ follows from the calculation above with $g=1$.
\end{proof}

\subsection{The ground state transform}
The ground state transform is vital to deal with Schr\"odinger operators $ H=L+q $ specifically under the presences of  potentials $ q $ with non-vanishing negative part $ q_{-}\neq0 $. Namely, given a strictly positive (super)harmonic function one can reduce the analysis to the one of Laplace type operators (resp. Schr\"odinger operators with positive potentials).

This transform was first used to show Hardy inequalities by \cite{FSW08} and later by \cite{BG}. We present here the notation that is needed for the paper and refer the reader for a more detailed discussion to \cite[Section~4.2]{KePiPo1}.

For any function $ v\in C(X) $ we denote the operator of multiplication by $ v $ by $ T_{v} $ and note that whenever $ v $ does not vanish its inverse is given by $ T_{v^{-1}} $. For strictly positive $ v $,  we define ground state transform of a Schr\"odinger operator $ \H $  on $ T_{v}^{-1} F(X)$
\begin{align*}
\H_{v}=T_{v}^{-1}\H T_{v}
\end{align*}
and if additionally $ v\in \F(X)$, we define the bilinear form $h_{v}: C_{c}(X)\times
C_{c}(X)\to \mathbb{R}$ via
\begin{align*}
h_{v}(\ph,\psi):=\frac{1}{2}\sum_{x,y\in X}
b(x,y)v(x)v(y)(\ph(x)-\ph(y))(\psi(x)-\psi(y)).
\end{align*}

These two notions are related by the following formula.
See \cite[Proposition~4.8]{KePiPo1} for a proof and references therein for a discussion of the history in the discrete setting.
\begin{pro} [Ground state transform]\label{t:GST}	Let $v\in \F(X)$ be strictly positive, $f\in C(X)$ such that $\H v=f
	v$. Then
		 \begin{align*}
	 h(\ph,\psi)
	 =h_{v}\left({\frac{\ph}{v},\frac{\psi}{v}}\right)
	 +\langle f\ph,\psi\rangle \qquad \forall \ph,\psi\in C_{c}(X).
	 \end{align*}
\end{pro}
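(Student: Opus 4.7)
The plan is to prove the identity by a direct algebraic expansion together with one symmetrization step exploiting $b(x,y)=b(y,x)$. All sums involved are absolutely convergent: $\varphi, \psi \in C_c(X)$ implies $\varphi/v, \psi/v \in C_c(X)$, and the terms in $h_v$ with only one endpoint in the support of $\varphi$ or $\psi$ are controlled by $\sum_{y} b(x,y)v(y) < \infty$, which holds since $v \in F(X)$.

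The key pointwise identity, obtained by expanding both sides, reads
\begin{align*}
v(x)v(y)&\left(\frac{\varphi(x)}{v(x)}-\frac{\varphi(y)}{v(y)}\right)\left(\frac{\psi(x)}{v(x)}-\frac{\psi(y)}{v(y)}\right) \\
&= (\varphi(x)-\varphi(y))(\psi(x)-\psi(y)) + \varphi(x)\psi(x)\frac{v(y)-v(x)}{v(x)} + \varphi(y)\psi(y)\frac{v(x)-v(y)}{v(y)}.
\end{align*}
Multiplying by $\tfrac12 b(x,y)$ and summing over $x,y\in X$, the left-hand side becomes $h_v(\varphi/v,\psi/v)$, and the first term on the right contributes the gradient part of $h(\varphi,\psi)$. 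After swapping the roles of $x$ and $y$ in the last summand (using $b(x,y)=b(y,x)$), the two cross terms become equal and together yield
\begin{align*}
\sum_{x,y\in X}b(x,y)\,\varphi(x)\psi(x)\frac{v(y)-v(x)}{v(x)}
= -\sum_{x\in X}\frac{\varphi(x)\psi(x)}{v(x)}\,Lv(x).
\end{align*}

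Combining these steps gives
\begin{align*}
\frac12 \sum_{x,y\in X} b(x,y)(\varphi(x)-\varphi(y))(\psi(x)-\psi(y))
= h_v\!\left(\frac{\varphi}{v},\frac{\psi}{v}\right) + \sum_{x\in X}\frac{\varphi(x)\psi(x)}{v(x)}Lv(x).
\end{align*}
Finally, adding $\sum_x q(x)\varphi(x)\psi(x)$ to both sides and using the hypothesis $Hv = Lv + qv = fv$, i.e.\ $Lv(x)/v(x)+q(x)=f(x)$, identifies the left-hand side with $h(\varphi,\psi)$ and the extra term with $\langle f\varphi, \psi\rangle$, yielding the claimed formula.

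The only genuinely delicate point is the interchange of summations needed to do the symmetrization; once absolute convergence is justified from the compact support of $\varphi,\psi$ and the $F(X)$ property of $v$, everything reduces to bookkeeping. The rest of the argument is purely algebraic and does not use any positivity or harmonicity beyond the given eigenfunction-type equation $Hv = fv$.
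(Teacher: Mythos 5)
Your proof is correct: the pointwise identity checks out, the symmetrization step using $b(x,y)=b(y,x)$ is valid, and absolute convergence follows as you say from $\ph,\psi\in C_c(X)$ together with $v\in \F(X)$ and $\sum_y b(x,y)<\infty$. The paper itself does not prove this proposition but defers to \cite[Proposition~4.8]{KePiPo1}; your direct expansion-plus-symmetrization argument is the standard derivation of the ground state transform and supplies a complete self-contained proof.
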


Clearly, whenever there exists a strictly positive $\H$-harmonic
function $ v$, criticality can be carried over directly from $h$ to
$h_{v}$ and vice versa.  In this paper we are  concerned with
criticality of a form $h-w$ for some positive function $ w\ge0 $. However,
criticality, null-criticality and even optimality near infinity can
be carried over from $h_{v}-v^{2}w$ to $h-w$ as well.

\begin{cor}\label{c:gst} Let  $v\in \F(X)$ be a strictly positive
	$\H$-(super)harmonic function and $w\ge0$.
	\begin{itemize}
		\item [(a)] $h-w$ is critical if and only if $h_{v}-v^{2}w$ is
		critical.
		\item [(b)]
		$h-w$ is null-critical with respect to $w$ if and only if $h_{v}-v^{2}w$ is
		null-critical with respect to $v^{2}w$.
		\item [(c)] $w$ is  optimal near infinity for $h$ if and only if $v^{2}w$
		is  optimal near infinity for $h_{v}$.
	\end{itemize}
\end{cor}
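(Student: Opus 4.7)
The plan is to deduce all three equivalences directly from the ground state transform identity in Proposition~\ref{t:GST}. Writing $\H v = f v$, so that $f \ge 0$ with $f \equiv 0$ when $v$ is $\H$-harmonic, and setting $\tau\phi := \phi/v$, the proposition rearranges to
\[
(h-w)(\phi) \;=\; (h_v - v^2 w)(\tau\phi) \;+\; \sum_{x\in X} v(x)^2 f(x)\, (\tau\phi)(x)^2
\]
for every $\phi \in C_c(X)$. The map $\tau$ is a bijection of $C_c(X)$ preserving supports (since $v>0$), and under $\tau$ a positive function $\tilde w$ on $X$ corresponds to the positive function $v^2 \tilde w$. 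I would first carry out the argument in the harmonic case ($f\equiv 0$), where the identity becomes the bare equality $(h-w)(\phi) = (h_v - v^2 w)(\tau\phi)$.

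For part (a), subcriticality of $h - w$ is the existence of a positive $\tilde w$ with $h - w \ge \tilde w$ on $C_c(X)$. Via $\tau$ and the correspondence $\tilde w \leftrightarrow v^2\tilde w$, this is equivalent to $h_v - v^2 w \ge v^2 \tilde w$ on $C_c(X)$, so subcriticality and hence criticality (its contrapositive) transfer. For part (c), since $\tau$ restricts to a bijection on $C_c(X\setminus W)$ for every finite $W\subseteq X$, applying the identity with $w$ replaced by $(1+\lambda)w$ translates the failure of $h - w \ge \lambda w$ on $C_c(X\setminus W)$ to the failure of $h_v - v^2 w \ge \lambda v^2 w$ on the same set, for each $\lambda>0$, yielding the equivalence of optimality near infinity.

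For part (b), I would invoke the characterization recalled from Theorem~5.3 of~\cite{KePiPo1}: a critical form has, up to scaling, a unique positive superharmonic function --- its ground state. Combining this with part (a), the ground state $\psi$ of $h - w$ is identified with the ground state $\tau\psi = \psi/v$ of $h_v - v^2 w$, and the invariance of the null-critical sum
\[
\sum_{x\in X} (\tau\psi)(x)^2 \cdot v(x)^2 w(x) \;=\; \sum_{x\in X} \psi(x)^2 w(x)
\]
shows that one sum diverges if and only if the other does. The main obstacle is the extra non-negative term $\sum v^2 f (\tau\phi)^2$ in the strictly superharmonic case, which a priori pairs $h - w$ with $h_v - v^2(w-f)$ rather than with $h_v - v^2 w$. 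I would resolve this either by replacing $\H$ with $\H - f\cdot\Id$ to make $v$ harmonic for the shifted operator before running the above argument, or by using the (critical-form) ground state, which is automatically $(H-w)$-harmonic, in place of a generic $v$.
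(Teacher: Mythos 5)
Your argument is essentially the paper's: both rest on the ground state transform together with the characterization of criticality via the unique positive superharmonic function (equivalently, null-sequences), and your treatments of (a), (b), (c) in the harmonic case match the paper's --- the paper routes (c) through subcriticality of $h-(1+\lambda/2)w$ in $X\setminus W$ and part (a), while you push the inequality directly through the support-preserving bijection $\tau$; both work. The one substantive difference is the superharmonic case. Your worry about the extra term $\sum_X v^2 f\,(\tau\phi)^2$ is well founded if one reads $h_v$ literally as the pure Dirichlet form of Section~\ref{s:toolbox}: with that reading, $(h_v-v^2w)(\tau\phi)=(h-w-f)(\phi)$, so if $f\gneqq 0$ and $h-w$ is critical then $h_v-v^2w$ is not even nonnegative and (a) would fail. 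The paper resolves this not by shifting $\H$ but by arguing at the operator level: the algebraic identity $\H_v-w=T_{v}^{-1}(\H-w)T_v$ holds for \emph{any} strictly positive $v$, so $\psi\mapsto\psi/v$ is a bijection between positive $(\H-w)$-superharmonic and positive $(\H_v-w)$-superharmonic functions, and ``$h_v-v^2w$'' in the corollary is to be read as the quadratic form of the operator $\H_v-w$, i.e.\ with the potential $q'=v\,\H v=v^2f$ included in $h_v$ --- exactly the convention used later in the proof of Theorem~\ref{thm_optimal_H}, where $q'=v(\H v)$ appears explicitly. Under that convention your identity becomes $(h-w)(\phi)=(h_v-v^2w)(\tau\phi)$ with no leftover term and your argument goes through verbatim, including the ground-state identification in (b) (which, as in the paper, is most cleanly justified by the operator conjugation, since the ground state is not compactly supported). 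By contrast, your two proposed fixes would prove statements about different forms: replacing $\H$ by $\H-f$ yields the equivalence for $h-f-w$ rather than $h-w$, and substituting the ground state of $h-w$ for $v$ changes which transform is being performed, since that function is $(\H-w)$-harmonic rather than $\H$-harmonic.
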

\begin{proof} By the ground state transform the quadratic form associated to the operator $ \H_{v}-w $  is $ h_{v}-v^{2}w $ (where Proposition~\ref{t:GST} is applied for the operator $ \H-w $ and $ f=w $). Furthermore, for any $\psi\in \F(X)$  we see that
	\begin{align*}
	(\H-w)\psi =     (T_{v}\H_{v} T_{v^{-1}}-w)\psi
	=   T_{v}(\H_{v}-w)T_{v^{-1}}\psi = T_{v} (\H_v-w) \frac{\psi}{v}\,.
	\end{align*}
	Therefore, every positive $(\H-w)$-(super)harmonic function $\psi$ yields a
	positive $(\H_{v}-w)$-(super)harmonic function $\psi/v$. Thus, $h-w$
	is critical if and only if $h_v-v^{2}w$ is critical by \cite[Theorem~5.3]{KePiPo1}. This proves (a). For the very same reason,
	statement (b) follows immediately by the definition of  null-criticality.
	
	
	Finally, $w$ not being optimal near infinity for $h$ means there is a
	finite $W\subseteq X$ and $\lm>0$ such that $h-w\ge\lm w$ on $C_{c}(X\setminus W)$. Hence,
	$h-(1+\lm/2)w$ is subcritical in $X\setminus W$ which according to
	(a) yields that $h_{ v}-(1+\lm/2) v^{2}w$ is subcritical there. Consequently,
	$h_{v}-v^{2}w\ge (\lm/2) v^{2}w$ on $C_{c}(X\setminus W)$ and thus, $v^{2}w$ is
	not optimal near infinity (for $h_{v}$). By the same argument, $w$ is not optimal
	near infinity for $h$ when $v^{2}w$ is not optimal near infinity for $h_{v}$ which
	shows (c).
\end{proof}

\subsection{Coarea formula}\label{s:coarea}
In this section we establish the pivotal tool for the proof of the main
theorems. It allows us to translate calculations and estimates of
infinite sums over graphs to one dimensional integrals.
\begin{thm}\label{t:coarea}
Let $b$ be a  connected graph over $X$, and let $u\in C(X)$ be positive. Let  $f:(\inf u,\sup u)\to[0,\infty)$ be a
Riemann integrable function. Then
\begin{align}\label{eq_coarea}
\frac{1}{2}\sum_{x,y\in X\times
X}b(x,y)&(u(x)-u(y))\int^{u(x)}_{u(y)}f(t)\,\mathrm{d}t = \int_{\inf u}^{\sup
u}f(t)g(t)\,\mathrm{d}t,
\end{align}
 where both sides can take the value
$+\infty$, and  $g:(\inf u,\sup u)\to[0,\infty]$ is given by
\begin{align*}
g(t):=    \sum_{\substack{x,y\in X\\u(y)< t\leq u(x)}}b(x,y)(u(x)-u(y)).
\end{align*}
Assume further that $u\in F(X)$ is $L$-harmonic outside of a finite
set,  and
\begin{itemize}
  \item[(a)] $u^{-1}(I)$ is finite for any compact $I\subseteq (\inf u,\sup u)$,
  \smallskip
  \item[(b)] $\displaystyle{\sup_{\substack{x,y\in X \\ x\sim y}}\frac{u(x)}{u(y)}<\sup_{x,y\in X }\frac{u(x)}{u(y)}}$.
\end{itemize}
Then there are positive constants $c$ and $C$ such that
\begin{align*}
    c\leq g\leq C,
\end{align*}
 and if in addition $u$ is $L$-harmonic in $X$, then $g$ is constant.
\end{thm}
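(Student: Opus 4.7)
I would first prove the coarea identity \eqref{eq_coarea} directly from Tonelli: the integrand $(u(x)-u(y))\int_{u(y)}^{u(x)} f(t)\,dt$ is symmetric under $x\leftrightarrow y$ (both factors change sign), so the $\tfrac12$ unfolds the sum to ordered pairs with $u(y)<u(x)$. Rewriting $\int_{u(y)}^{u(x)} f(t)\,dt = \int f(t)\,\mathbf{1}_{(u(y),u(x)]}(t)\,dt$ makes every summand non-negative, so sum and integral may be swapped (with values in $[0,+\infty]$), yielding both the identity and the explicit formula for $g$.

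The remaining assertions all rest on a single ``divergence'' identity that I would establish next. Fix $\inf u < t_1 < t_2 < \sup u$ and set $F := u^{-1}([t_1,t_2])$, finite by (a). In the now honestly finite double sum
\begin{equation*}
\sum_{x\in F} Lu(x) = \sum_{x,y\in F} b(x,y)(u(x)-u(y)) + \sum_{x\in F,\,y\notin F} b(x,y)(u(x)-u(y)),
\end{equation*}
the first piece vanishes because the swap $x\leftrightarrow y$ preserves $b(x,y)$ but flips the sign of $u(x)-u(y)$, while the second piece splits according to whether $u(y)<t_1$ or $u(y)>t_2$. Matching these two sub-pieces against the definitions of $g(t_1)$ and $g(t_2)$, and using $b$-symmetry to turn the $F\times\{u>t_2\}$ sum into (minus) a piece of $g(t_2)$, collapses everything to
\begin{equation*}
g(t_1)-g(t_2)=\sum_{x\in F} Lu(x)=\sum_{x\in F\cap K} Lu(x),
\end{equation*}
the last equality being $Lu\equiv 0$ on $X\setminus K$.

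From this I would draw three conclusions. First, $g$ is locally constant on $(\inf u,\sup u)\setminus u(K)$, so it takes only finitely many values, with total oscillation bounded by $\sum_{x\in K}|Lu(x)|$. Second, for each $t\in(\inf u,\sup u)$ the connectedness of $(X,b)$ produces a path from $\{u>t\}$ to $\{u<t\}$ and hence at least one level-crossing edge, so $g(t)>0$. Third---the subtle step where (b) enters---one must exclude $g\equiv+\infty$: setting $Q:=\sup_{x\sim y} u(x)/u(y)$, the strict inequality in (b) gives $Q<\sup u/\inf u$, so the subinterval $(\inf u,(\sup u)/Q)$ is non-empty. For $t_0$ there, any level-crossing edge $x\sim y$ at level $t_0$ forces $u(x)\in[t_0,Qt_0]\subset(\inf u,\sup u)$, whence by (a) such $x$ range over a finite set; summing the trivial bound $u(x)\sum_y b(x,y)$ over these finitely many $x$ gives $g(t_0)<\infty$. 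Combined, one obtains $0<c\le g\le C<\infty$.

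For the last claim one simply takes $K=\emptyset$ in the divergence identity, giving $g(t_1)=g(t_2)$ for all admissible $t_1<t_2$, so $g$ is constant. The main obstacle I foresee is precisely the finiteness step of the third consequence: the divergence identity controls the oscillation of $g$ but not its absolute scale, and the strict inequality in (b) is what carves out a non-empty range of $t_0$ on which the level-crossing edges can be confined to a finite set via (a).
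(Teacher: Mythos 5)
Your proposal is correct and follows essentially the same route as the paper: Tonelli's theorem for the identity \eqref{eq_coarea}, and a Stokes-type formula $g(t_1)-g(t_2)=\sum_{x\in F}\L u(x)$ (the content of the paper's Lemma~\ref{l:co-area}, proved there via Green's formula paired with the indicator of $A=\{t_1<u\le t_2\}$), which yields piecewise constancy with finitely many jumps, while connectedness gives the lower bound. The one step you execute differently is the upper bound on $g$: the paper converts assumption (b) into the additive condition $\sup_{x\sim y}(u(x)-u(y))<\sup u-\inf u$ and chooses two levels further apart than any single edge can span, whereas you use (b) multiplicatively to confine the upper endpoints of all level-crossing edges at a level $t_0<\sup u/Q$ to the compact interval $[t_0,Qt_0]\subset(\inf u,\sup u)$, which is finite by (a); both arguments work, and yours has the small advantage of not relying on the paper's (not entirely obvious) additive reformulation of (b). The only blemish is bookkeeping: your choice $F=u^{-1}([t_1,t_2])$ does not exactly match the strict/non-strict inequalities in the definition of $g$, so the divergence identity as you state it can be off at the countably many levels lying in the range of $u$; the paper's own proof has the same measure-zero sloppiness (its $\partial\Omega_t$ with $\Omega_t=\{u>t\}$ does not exactly reproduce the stated $g$ either), and it affects none of the conclusions.
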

\begin{rem}(a) Note that by  $f\ge0$, the terms in the sum
on both sides  of the equality \eqref{eq_coarea} above are always
 greater than or equal to zero.

(b) Let $u$ and $f$ be as in Theorem~\ref{t:coarea}  with  $f$ being
continuous. Then by the Lagrange's mean value theorem, if $u(x)\neq u(y)$ there is
$\theta_{x,y}\in (u(x)\wedge u(y),u(x)\vee u(y))$ such that
\begin{align*}
    f(\theta_{x,y})
    =\frac{\int^{u(x)}_{u(y)}f(t)\,\mathrm{d}t}{u(x)-u(y)}\,.
\end{align*}
Consequently, the coarea formula reads as
\begin{align*}
\frac{1}{2}\sum_{x,y\in X\times
X}b(x,y)(u(x)-u(y))^{2}f(\theta_{x,y})= \int_{\inf u}^{\sup
u}f(t)g(t)\,\mathrm{d}t.
\end{align*}
\end{rem}

The following lemma can be interpreted as a Stokes type theorem.
Specifically, the function $g$ can be viewed as the integral of the
normal derivative over the boundary of the level set $\{x\mid
u(x)>t\}$ of $u$ for some $t$. Formula \eqref{eq_gt2} of the Lemma~\ref{l:co-area} then shows that the
function $g$ at $t_{1}$ and $t_{2}$ differs only by the nonharmonic
contribution of $u$ on the set $A=\{x\mid t_{1}<u(x)\leq t_{2}\}$.
Provided \eqref{eq_gt2}, the proof of the coarea-formula reduces to
algebraic manipulations and the application of Tonelli's theorem.

\begin{lemma}[Stokes-type formula]\label{l:co-area} Let $u\in \F(X)$
be a positive nonconstant function such that $u^{-1}(I)$ is finite for any compact  $I\subseteq (\inf u,\sup u)$.
Let
\begin{equation*}
g:(\inf u,\sup u)\to[0,\infty], \qquad g(t):=\!\!\!\sum_{\substack{x,y\in X\\u(y)< t\leq u(x)}}\!\!b(x,y)(u(x)-u(y)).
\end{equation*}
Then for any $t_{1},t_{2}\in(\inf u,\sup u)$  such that $t_{ 1}\leq
t_{2}$, the set $$A:=\{x\in X\mid t_{1}< u(x)\leq t_{2}\}$$ is finite,
and
\begin{align}\label{eq_gt2}
    g(t_{2})=g(t_{1})-\sum_{x\in A}\L u(x),
\end{align}
where both sides may take the value $+\infty$. In particular, $g$ is
monotone decreasing whenever $u$ is $L$-superharmonic.

Moreover, if
$$\sup_{x,y\in X \, x\sim y}\frac{u(x)}{u(y)}<\sup_{x,y\in X }\frac{u(x)}{u(y)}\,,$$
and
$u$ is $L$-harmonic outside of a finite set, then $g$ is piecewise
constant with finitely many jumps, and for some positive constants $c,C$
\begin{align*}
0<    c\leq g\leq C<\infty.
\end{align*}
Furthermore,  if $u$ is $L$-harmonic in $X$, then $g$ is constant.
\end{lemma}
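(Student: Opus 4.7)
The plan is to prove the four assertions in sequence: finiteness of $A$, the identity \eqref{eq_gt2} and the resulting monotonicity, piecewise constancy with finitely many jumps, and the two-sided positive bounds on $g$.

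Finiteness of $A$ is immediate: $[t_1,t_2]$ is a compact subset of $(\inf u, \sup u)$, so $u^{-1}([t_1,t_2])$ is finite by the properness hypothesis, and $A \subseteq u^{-1}([t_1,t_2])$. For the Stokes-type identity I would write
\begin{align*}
g(t_1) - g(t_2) = \sum_{x,y \in X} b(x,y)(u(x)-u(y))\bigl[\mathbf{1}_{u(y)<t_1\le u(x)} - \mathbf{1}_{u(y)<t_2\le u(x)}\bigr],
\end{align*}
then observe that the indicator difference forces one of $u(x),u(y)$ to lie in $[t_1,t_2)$, so one endpoint of each contributing pair belongs to a finite set. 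Reorganizing by grouping summands around that endpoint, the subsum over pairs with both endpoints in $A$ vanishes by antisymmetry of $b(x,y)(u(x)-u(y))$, and the remainder matches $\sum_{z \in A} L u(z)$. Monotonicity under $L$-superharmonicity is then an immediate corollary because $Lu \ge 0$ on $A$.

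Assume now that $u$ is $L$-harmonic outside a finite set $K$. Then the right-hand side of \eqref{eq_gt2} reduces to $\sum_{z \in A \cap K} Lu(z)$, so $g(t_2)$ depends on $t_2$ only through $A \cap K$. As $t_2$ varies between consecutive elements of the finite set $u(K)$ the set $A \cap K$ is constant, hence $g$ is piecewise constant with at most $|u(K)|$ jumps; when $K = \emptyset$ there are no jumps at all and $g$ is constant. The two-sided bounds use both hypotheses: setting $M := \sup_{x \sim y} u(x)/u(y)$, assumption (b) forces $M<\infty$, and any nonzero summand in $g(t)$ has $u(y) \ge u(x)/M \ge t/M$. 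Thus the admissible $y$'s lie in $u^{-1}([t/M, t))$, which is finite by (a); absolute summability of $\sum_x b(x,y)u(x)$ (from $u \in F(X)$) then makes the inner sum over $x$ finite, so $g(t) < \infty$. Together with the piecewise constancy already established, $g$ assumes only finitely many finite values, and each is strictly positive because connectedness of the graph forces at least one edge to cross every level $t \in (\inf u, \sup u)$. Taking $c$ and $C$ as the minimum and maximum of these finitely many positive values yields $0 < c \le g \le C < \infty$.

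The main obstacle is the bookkeeping in the Stokes step: the strict/nonstrict inequalities in the definitions of $g$ and $A$ are not perfectly symmetric, so vertices $z$ with $u(z) = t_1$ or $u(z) = t_2$ must be handled carefully to ensure that the boundary contributions cancel consistently on both sides of \eqref{eq_gt2}. Organizing the reorganization around the unique endpoint lying in the finite set $A$ (rather than attempting to telescope $g(t) = \sum_{u(x) \ge t} Lu(x)$, which fails in general because the sum is not absolutely convergent) sidesteps this issue and reduces the identity to a genuinely finite rearrangement.
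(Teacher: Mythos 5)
Your proposal follows the same overall route as the paper's proof: finiteness of $A$ from properness; the Stokes identity by a finite rearrangement organized around the endpoint that lies in the finite transition layer (the paper packages this as the Green formula $\sum_{A}\L u=\sum_{\partial A}b\nabla u$ followed by bookkeeping with the boundaries of $\Omega_{t_1}$, $\Omega_{t_2}$ and $A$ --- your indicator computation is exactly that cancellation unpacked, and your observation that one must not try to telescope a non-absolutely-convergent sum is the right one); piecewise constancy because $\sum_{A\cap K}\L u$ can change only as $t$ crosses the finitely many values in $u(K)$; and strict positivity of $g$ from connectedness. Your worry about the strict/nonstrict inequalities is well placed: carried out literally, your indicator difference produces the set $\{t_1\le u<t_2\}$ rather than $A=\{t_1<u\le t_2\}$, and the paper's own argument has the same off-by-a-boundary feature (its proof silently identifies $g(t)$ with $\sum_{\partial\Omega_t}b\nabla u$, which corresponds to $u(y)\le t<u(x)$ rather than $u(y)<t\le u(x)$). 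The two sets differ only when $u$ attains the value $t_1$ or $t_2$, which is immaterial for every use of the lemma (monotonicity, piecewise constancy, integration in $t$), but you should fix one convention and prove that one.

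The one step that does not work as written is the upper bound. You assert that the admissible $y$'s for a level $t$ lie in $u^{-1}([t/M,t))$, ``which is finite by properness.'' Properness only gives finiteness of $u^{-1}(I)$ for compact $I\subseteq(\inf u,\sup u)$; if $\inf u>0$ and $t\le M\inf u$, then $[t/M,t]$ is not contained in $(\inf u,\sup u)$ and $u^{-1}([t/M,t))$ may genuinely be infinite (infinitely many vertices with $u$-values accumulating at $\inf u$ from above). The hypothesis $M=\sup_{x\sim y}u(x)/u(y)<\sup_{x,y}u(x)/u(y)=\sup u/\inf u$ is precisely what rescues the argument: it makes the interval $(M\inf u,\sup u)$ nonempty, and for $t$ in that interval $[t/M,t]$ is a compact subset of $(\inf u,\sup u)$, so your localization does give $g(t)<\infty$ for at least one $t$; finiteness then propagates to all of $(\inf u,\sup u)$ through \eqref{eq_gt2}, since the correction term $\sum_{A}\L u$ is a finite sum of finite numbers. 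The paper does the analogous thing additively: it converts the hypothesis into $\sup_{x\sim y}(u(x)-u(y))<\sup u-\inf u$, chooses $t_1,t_2$ further apart than any single edge increment, and bounds $g(t_2)$ by $\sum_{\partial A}b|\nabla u|<\infty$. Either way, you must make explicit that the strict inequality in the hypothesis is what furnishes the one value of $t$ at which finiteness can be verified directly; as stated, your finiteness claim fails for small $t$.
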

\begin{proof}
For this proof we denote $\nabla f:=f(x)-f(y)$ for functions $f$
whenever summing over $x$ and $y$ such that $x\sim y$.

For $t>0$, define $$\Omega_{t}:=\{x\in X\mid u(x)>t\}.$$ Let
$t_{1},t_{2}\in (\inf u,\sup u)$ with $t_{1}\le t_{2}$, and define
$A$ to be
$$A:=\Omega_{t_{1}}\setminus\Omega_{t_{2}}=\{x\in X\mid t_{1}<u(x)\leq
t_{2} \}. $$

By the assumption that the pre-images of $u$ of compact sets in the interval
$(\inf u,\sup u)$ are finite, the set $A$ is finite. Therefore, the
characteristic function $1_{A}$ of $A$ is in $C_{c}(X)$. For $B\subset X$ we denote
$$\partial B:=\{(x,y)\in X\times X \mid x\in B, \; y\not\in B\}.$$

Since $u\in
\F(X)$,  we can apply the Green formula, \cite[Lemma 4.7]{HK}, for $u$ paired with $1_{A}$ to
see
\begin{align*}
\sum_{A}{\L}u     &=\sum_{ X}1_{A}{\L}u
    =\frac{1}{2}\sum_{X\times
    X}b\nabla u\nabla 1_{A}=\sum_{ \partial A
    } b\nabla u,
\end{align*}
where the right hand side also converges absolutely.

In the next step we show that the sum on the left hand side can be
split into a difference of a sum over the boundary of $\Om_{t_1}$
and $\Om_{t_{2}}$. To this end, we observe that for any $B\subseteq X$, we have $(x,y)\in \partial
B$ if and only if $(y,x)\in
\partial (X\setminus B)$.
Moreover, since $\Omega_{t_{2}}\subseteq \Omega_{t_{1}}$, we
conclude
\begin{align*}
\partial A=\left(\partial \Omega_{t_{1}}\cap\partial A\right)
\cup \left(\partial (X\setminus \Omega_{t_{2}})\cap \partial
A\right),
\end{align*}
and
\begin{align*}
\partial\Omega_{t_{1}}\setminus \partial A
=\partial\Omega_{t_{2}}\setminus \partial (X\setminus A).
\end{align*}
\begin{figure}[h]
\begin{overpic}
[width=1\textwidth]{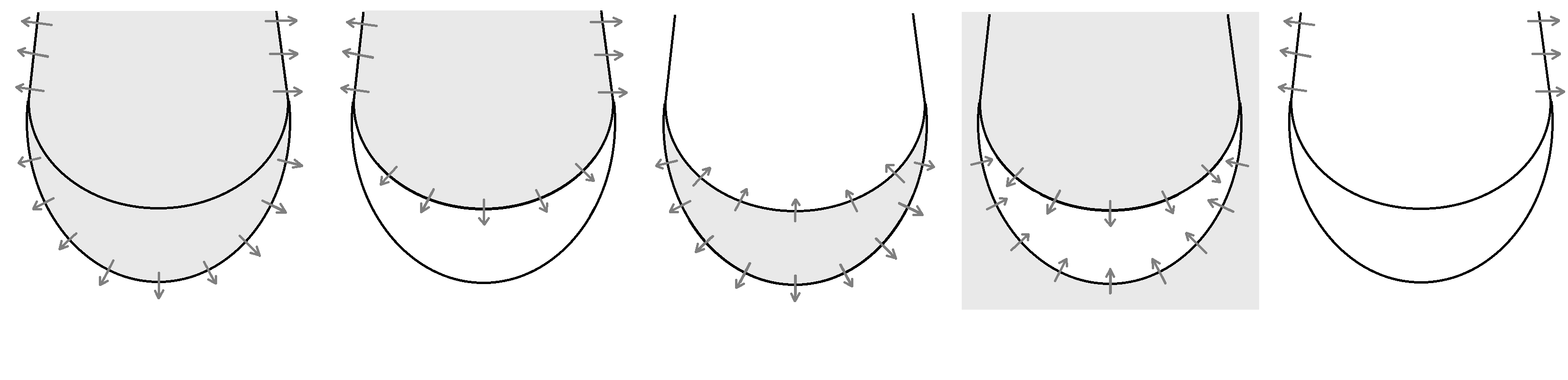}
\put(30,26){$\Omega_{t_{1}}$}\put(23,1){{\large$\partial\Omega_{t_{1}}$}}
\put(105,50){$\Omega_{t_{2}}$}\put(100,1){{\large$\partial\Omega_{t_{2}}$}}
\put(180,26){$A$}\put(172,2){{\large$\partial{A}$}}
\put(240,50){$X\setminus{A}$}\put(232,1){{\large$\partial({X\!\setminus\!{A}})$}}
\put(305,7){{$\partial\Omega_{t_{1}}\!\setminus\! \partial
A$}}\put(290,-7){{$ =\partial\Omega_{t_{2}}\!\!\setminus\!\!
\partial (X\!\!\setminus\!\! A)$}}
\end{overpic}
\caption{Illustrations of the sets and their boundaries}
\end{figure}
Thus, since $\sum_{(x,y)\in\partial A}b(x,y)(u(x)
-u(y))$ converges absolutely, we obtain by the considerations above
\begin{align*}
\sum_{\partial A}b\nabla u
 &=\sum_{\partial \Omega_{t_{1}}\cap\partial A}\hspace{-.2cm}b\nabla u
+ \sum_{\partial (X\setminus \Omega_{t_{2}})\cap \partial A}
\hspace{-.3cm}b\nabla u =\sum_{\partial \Omega_{t_{1}}\cap\partial
A}\hspace{-.2cm}b\nabla u -\sum_{\partial\Omega_{t_{2}}\cap \partial
(X\setminus A)}\hspace{-.3cm}b\nabla u.
\end{align*}
We employ the equalities above to see
\begin{align*}
    g(t_{2})&=\sum_{\partial \Omega_{t_{2}}}b\nabla u
    =\sum_{\partial \Omega_{t_{2}}\setminus
    \partial (X\setminus A)}b\nabla u +
    \sum_{\partial \Omega_{t_{2}}\cap \partial (X\setminus
    A)}b\nabla u\\
    &=
    \sum_{\partial \Omega_{t_{1}}\setminus\partial
    A}b\nabla u+\sum_{\partial \Omega_{t_{2}}\cap\partial(X\setminus A)}
b\nabla u\\
    &= g(t_{1})-
    \sum_{\partial \Omega_{t_{1}}\cap \partial A}b\nabla u+
    \sum_{\partial \Omega_{t_{2}}\cap\partial(X\setminus A)}b\nabla u
\\
    &= g(t_{1})-
    \sum_{\partial \Omega_{t_{1}}\cap \partial A}b\nabla u
    - \sum_{\partial(X\setminus \Omega_{t_{2}})\cap
\partial A}b\nabla u\\
    &= g(t_{1})-\sum_{\partial A}b\nabla u
     =g(t_{1})-\sum_{A}\L u.
\end{align*}
As $\sum_{A}\L u <\infty$, this shows for all $t_{1},t_{2}\in (\inf u,\sup u)$ we have $g(t_{1})<\infty$  if and only
if $g(t_{2})<\infty$.

If $u$ is $L$-harmonic outside of a finite set, then
 the mapping $ \{B\subseteq X\}\to \R $, $B\mapsto \sum_{B}\L u$ takes only finitely many
values. Therefore, by \eqref{eq_gt2}, $g$ takes only finitely many
values. Moreover, since $\sum_{A}\L u$ as a function of $t_{2}$
changes only at finitely many $t_{2}$ and vice versa for $t_{1}$, the function
$g$ is piecewise constant with finitely many jumps.

Hence, $g\ge c>0$ can fail only if $g(t)=0$ for some $t\in(\inf
u,\sup u)$. However, this is impossible as $g(t)=0$ implies
$\partial \Om_{t}=\emptyset$ which implies either $t<\inf u$ or
$t\ge \sup u$ by the connectedness of the graph.

To see the upper bound for $g$, we employ the assumption $$\sup_{x,y\in X \,
x\sim y}\frac{u(x)}{u(y)}<\sup_{x,y\in X}\frac{u(x)}{u(y)}\,.$$ This
assumption is equivalent to
$$c:=\sup_{x\sim y}(u(x)-u(y))<\sup_{x\in X} u(x) - \inf_{x\in X} u(x).$$
 Hence, there are $t_{1},t_{2}\in (\inf u,\sup u)$ such that
$t_{2}-t_{1}> c$. For the choice of these $t_{1},t_{2}$, there is no
vertex in $\Om_{t_{2}}$ that is connected to a vertex outside of
$\Om_{t_{1}}$. Hence, $\partial \Omega_{t_{2}}=
\partial \Omega_{t_{2}}\cap\partial (X\setminus A)$ and we have by
the considerations above
\begin{align*}
g(t_{2})= \sum_{\partial \Omega_{t_{2}}\cap\partial (X\setminus
A)}\hspace{-.3cm}b\nabla u \leq \sum_{\partial A}b|\nabla u|<\infty.
\end{align*}
Thus, $g$ stays finite on $(\inf u,\sup u)$ and since $g$ is
piecewise constant with finitely many jumps, there is $C$ such that
$g\leq C$. This finishes the proof.
\end{proof}

\begin{proof}[Proof of Theorem~\ref{t:coarea}]
Let  $t>0$, and recall that
$$\Omega_{t}=\{x\in X\mid u(x)>t\}.$$ Let $1_{x,y}$
be the characteristic function of the interval
$$I_{x,y}=[u(x)\wedge u(y),u(x)\vee u(y)].$$ Observe that $(x,y)$ or $(y,x)$ are in
$\partial \Omega_{t}:=\Omega_{t}\times X\setminus \Omega_{t}$ if and
only if
$t\in I_{x,y}$. 
With this observation in mind, we calculate
\begin{align*}
\sum_{x,y\in
X\times X} b(x,y)&(u(x)-u(y))\int^{u(x)}_{u(y)}f(t)\,\mathrm{d}t\\
&= \sum_{x,y\in  X\times X}b(x,y)|u(x)-u(y)|\int_{\inf u}^{\sup
u}f(t)1_{x,y}(t)\,\mathrm{d}t.
\end{align*}
Now, by Tonelli's theorem we obtain
\begin{align*}
\ldots&=\int_{\inf u}^{\sup u}f(t)\sum_{x,y\in X\times
X}b(x,y)|u(x)-u(y)|1_{x,y}(t)\,\mathrm{d}t\\
&=2\int_{\inf u}^{\sup u}f(t)\sum_{(x,y)\in
\partial \Omega_{t}} b(x,y)|u(x)-u(y)|\,\mathrm{d}t\\
&=2\int_{\inf u}^{\sup u}f(t)\sum_{(x,y)\in \partial \Omega_{t}}
b(x,y)(u(x)-u(y))\,\mathrm{d}t
\end{align*}
since $u(x)\ge u(y)$ for $(x,y)\in\partial \Omega_{t}$. This shows
the first part of the theorem. The second part follows from
Lemma~\ref{l:co-area}.
\end{proof}
\section{Critical Hardy-weights}\label{s:crit}

In the following three sections we will prove
Theorem~\ref{thm_optimal_H} for operators $H=L+q$ with finitely supported  $q\ge0$.
This will be
achieved by the virtue of the coarea formula (Theorem~\ref{t:coarea}). The general case
will  be deduced in Section~\ref{sec_pf_main} using the ground state transform. We start by proving criticality in this
section, and show null-criticality and optimality at infinity in the
two succeeding sections.

\begin{thm}\label{t:critical}Let $b$ be a connected graph and $q\ge0$ be a
finitely supported potential. Suppose that $u$ is a
positive $\H$-superharmonic function that is $\H$-harmonic outside
of a finite set, and satisfies
\begin{itemize}
  \item [(a)] $u:X\to (0,\infty)$ is proper,
\medskip
  \item[(b)]
  $\displaystyle{\sup_{\substack{x,y\in X \\ x\sim
  y}}\frac{u(x)}{u(y)}}<\infty$.
  \end{itemize}
Let $h$ be the quadratic form associated to $\H$. Then $h- w$ with
$w:=\frac{\H\big[ u^{{1}/{2}}\big]}{u^{{1}/{2}}}$, is critical in
$X$.
\end{thm}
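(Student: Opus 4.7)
The plan is to identify $u^{1/2}$ as the natural ground-state candidate, apply the ground state transform to reduce $h-w$ to a pure kinetic-type form, and then construct an explicit null-sequence for that form using the coarea formula. Since $q\ge 0$, the constant function $1$ is $H$-superharmonic, so the chain rule for the square root (Corollary~\ref{c:squareroot}, applied with $f=u$ and $g=1$) yields that $u^{1/2}$ is $H$-superharmonic, whence $w=Hu^{1/2}/u^{1/2}\ge 0$, while $(H-w)u^{1/2}=0$ holds by the definition of $w$. Applying the ground state transform (Proposition~\ref{t:GST}) to $H-w$ with the strictly positive $(H-w)$-harmonic function $v:=u^{1/2}$ (so the mass term vanishes) then gives, for every $\varphi\in C_c(X)$,
\[
(h-w)(\varphi) \;=\; h_{u^{1/2}}(\varphi/u^{1/2}) \;=\; \frac{1}{2}\sum_{x,y\in X} b(x,y)\sqrt{u(x)u(y)}\left(\frac{\varphi(x)}{\sqrt{u(x)}}-\frac{\varphi(y)}{\sqrt{u(y)}}\right)^{2},
\]
which makes $h-w\ge 0$ transparent and reduces criticality to exhibiting a null-sequence, by the characterization in \cite[Theorem~5.3]{KePiPo1}. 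Note that $u$ is $L$-harmonic off the finite set given by the $H$-harmonicity exception together with $\supp q$, and that properness of $u$ together with $|X|=\infty$ forces $\sup u/\inf u=\infty$.

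For the null-sequence I would use a log-scale cutoff: for $n$ large, let $\eta_n\colon(0,\infty)\to[0,1]$ equal $1$ on the intersection of $[1/n,n]$ with $[\inf u,\sup u]$, vanish outside $[1/n^2,n^2]$, and interpolate log-linearly, so that $|\eta_n'(s)|=1/(s\log n)$ on the interpolation intervals. Set $\varphi_n:=u^{1/2}(\eta_n\circ u)$; by properness of $u$ we have $\varphi_n\in C_c(X)$, and $0\le\varphi_n\le u^{1/2}$ with $\varphi_n\to u^{1/2}$ pointwise. To verify $(h-w)(\varphi_n)\to 0$, I would combine the Cauchy-Schwarz estimate
\[
\bigl(\eta_n(u(x))-\eta_n(u(y))\bigr)^{2} \le |u(x)-u(y)|\int_{u(x)\wedge u(y)}^{u(x)\vee u(y)} \eta_n'(s)^{2}\,\mathrm{d}s
\]
with the pointwise bound $\sqrt{u(x)u(y)}\le\sqrt{C}\,s$ valid for every $s$ in that interval (where $C$ is the constant from assumption (b)), substitute into the explicit formula for $(h-w)(\varphi_n)=h_{u^{1/2}}(\eta_n\circ u)$, and invoke the coarea formula (Theorem~\ref{t:coarea}) with $f(s)=s\,\eta_n'(s)^{2}$, to arrive at
\[
(h-w)(\varphi_n)\;\le\; \sqrt{C}\int_{\inf u}^{\sup u} s\,\eta_n'(s)^{2}\, g(s)\,\mathrm{d}s \;\le\; \sqrt{C}\,C'\int_{0}^{\infty} s\,\eta_n'(s)^{2}\,\mathrm{d}s \;\le\; \frac{C''}{\log n}\;\to\; 0,
\]
where $g\le C'$ is supplied by Lemma~\ref{l:co-area} under assumption (b) and the $L$-harmonicity of $u$ off a finite set.

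The main obstacle is the uniform pointwise bound $\sqrt{u(x)u(y)}\le\sqrt{C}\,s$ on the level interval $[u(x)\wedge u(y),u(x)\vee u(y)]$: this is precisely where assumption (b) enters in a non-negotiable way, by forcing the neighboring values of $u$ to be comparable so that the two-variable edge weight $\sqrt{u(x)u(y)}$ produced by the ground state transform can be majorized by the one-variable Riemann weight $s$. Without this comparison the coarea formula would not convert the graph-side edge weight into a useful one-dimensional integrand, and the log-scale cutoff would fail to produce the required $1/\log n$ decay.
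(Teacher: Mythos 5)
Your proposal is correct and follows essentially the same route as the paper: reduce via the ground state transform with $v=u^{1/2}$, take the log-linear cutoff $e_n=\eta_n\circ u$ supported where $u\in[1/n^2,n^2]$ as the null-sequence, and control $h_{u^{1/2}}(e_n)$ by the coarea formula with $f(s)=s\,\eta_n'(s)^2$, with assumption (b) supplying both the comparability $\sqrt{u(x)u(y)}\le\sqrt{C}\,s$ on each level interval and the boundedness of $g$ in Lemma~\ref{l:co-area}. The only (immaterial) difference is that you apply unweighted Cauchy--Schwarz and then bound the edge weight pointwise, whereas the paper uses the $t$-weighted Cauchy--Schwarz and bounds the resulting logarithmic difference quotient; both yield the same $O(1/\log n)$ decay.
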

\begin{proof}
We set $v:=u^{{1}/{2}}$. Then $v$ is positive and $\H$-superharmonic
by the chain rule for the square root (Lemma~\ref{l:squareroot})
since $q\ge0$. Furthermore,  $v$ is obviously a positive $(\H-w)$-harmonic
function in $X$.

The strategy of the proof is to
construct a null-sequence $ (e_{n})  $ in $ C_{c}(X)  $ with respect to $(h-w)_{v}$, i.e.
$(h-w)_{v}(e_{n})\to 0$ and $ e_{n}\to 1 $ pointwise . By
\cite[Theorem~5.3~(iv)]{KePiPo1}, this then implies that
$(h-w)_{v}$ is critical, and hence the criticality of $h-w$.

 Set $\ph_{n}:\R\to\R$
\begin{multline*}
    \ph_{n}(t):=
      \left(  {2 + \frac{1}{\log n}\log(t)}  \right)
    1_{[\frac{1}{n^{2}},+\frac{1}{n}]}(t)
     + 1_{[\frac{1}{n},n]}(t)\\
     + \left(  {2 - \frac{1}{\log n}\log (t)}  \right)  1_{[n,n^{2}]}(t)\,,
\end{multline*}
 and let $e_{n}:=\ph_{n}\circ u$. Since $\mathrm{supp}\, \ph_{n}\subseteq
(0,\infty)$, and $\sup u=\infty$ or $\inf u=0$, we have $e_{n}\in C_{c}(X)$ by
assumption (a). Obviously, $e_{n}\to 1$ pointwise as $n\to\infty$.
So, we are left to show $( h-w)_{v}(e_{n}) \to 0$ as $n\to\infty$.
We compute
\begin{align*}
(h-w)_{v}({e_{n}})&=\frac{1}{2}\sum_{x,y\in
X}b(x,y)(u(x)u(y))^{{1}/{2}}({\ph_{n}(u(x))-\ph_{n}(u(y))})^{2}\\
&=\frac{1}{2}\sum_{x,y\in
X}b(x,y)\left({{u(x)-u(y)}}\right)c(x,y)\left({{\int^{u(x)}_{u(y)}t\ph_{n}'(t)^{2}\,\mathrm{d}t}}\right),
\end{align*}
where
\begin{equation*}
    c(x,y) :=
    \frac{(u(x)u(y))^{{1}/{2}}(\ph_n(u(x))-\ph_n(u(y)))^{2}}
    {(u(x)-u(y)){\int^{u(x)}_{u(y)}t \ph_{n}'(t)^{2}\,\mathrm{d}t}}
\end{equation*}
whenever the denominator is nonzero and $c(x,y)=0$ otherwise.

Since $c(x,y)$ always appears in a product with $b(x,y)$ it suffices
to consider $x,y$ with $x\sim y$.
By the anti-oscillation assumption~(b) there is a constant $ C_{0}:=\sup_{z\sim w}u(z)/u(w) $ such that for $  n > \sqrt{ C_0} $ we  have  $ u(x),u( y) \in(0,n]  $ or $ u(x),u(y)\in[1/n,\infty) $ for $ x\sim y $.
 We now use  the definition of $\ph_{n}$ and the elementary inequalities
\begin{align*}
|a\wedge c-b\wedge c|&\leq |a-b|, \qquad a,b,c\in \R, \\
\frac{\log b-\log a}{b-a}&\leq\log'(a)=\frac{1}{a},\qquad 0<a\le b<\infty,
\end{align*}
to estimate
\begin{align*}
     c(x,y)&\leq    \frac{(u(x)u(y))^{{1}/{2}}(\log u(x)-\log u(y))}
    {(u(x)-u(y))}\\
    &\leq\sup_{z,w\in
    X,z\sim w}\left({\frac{u(z)}{u(w)}}\right)^{{1}/{2}}=C_{0}
\end{align*}
for all $x\sim y$ and  $  n > \sqrt{ C_0} $. Notice that $C_{0}<\infty$ by our assumption (b).

We use this estimate and we apply now the coarea formula
 with $f(t)=t\ph'_n(t)^{2}$. To this end we note
that the assumptions of Theorem~\ref{t:coarea} are fulfilled: The
function $u$ is $\L$-harmonic outside of the finite set (including
the finite set where $q$ is supported), Theorem~\ref{t:coarea}~(a) is
fulfilled by assumption~(a), and  Theorem~\ref{t:coarea}~(b) is
fulfilled by assumption (b) and $\sup u=\infty$ or $\inf u=0$. Moreover, by the $\L$-harmonicity
of $u$ outside of a finite set, the function $g$ in the coarea
formula \eqref{eq_coarea} is piecewise constant. Therefore, there
exists a constant $C_{1} $ such that
\begin{align*}
(h-w)_{v}(e_{n})&\leq C_{0}\sum_{x,y\in X}
b(x,y)\left({{u(x)-u(y)}}\right)
\left({{\int^{u(x)}_{u(y)}t\ph_{n}'(t)^{2}\,\mathrm{d}t}}\right)\\
&\leq C_{1}\int^{\sup u}_{\inf u}t  \ph_{n}'(t)^{2}\,\mathrm{d}t\\
&\leq C_{2}\left({\frac{1}{\log
n}}\right)^{2}\left({\int_{\frac{1}{n^{2}}}^{\frac{1}{n}}\frac{\,\mathrm{d}t}{t}}
+\int_{{n}}^{{n}^{2}}\frac{\,\mathrm{d}t}{t}\right) =\frac{2C_{2}}{\log n}\xrightarrow[n\to\infty]{} 0.
\end{align*}
Thus, $ (e_{n})  $ is a null-sequence which implies that $h-w$ is critical by the discussion in the beginning of the proof.
\end{proof}

\section{Null-criticality}\label{s:nullcrit}

In the present section we prove the null-criticality assertion of
Theorem~\ref{thm_optimal_H}
under the additional assumption that $q$ is a positive finitely supported potential. For general $q$ the
statement is then deduced in
Section~\ref{sec_pf_main} using the ground state transform.

In the case $q\ge0$, it is convenient to extend the quadratic form
$h$ for a graph $b$  which is defined on  $C_{c}(X)$ to a map $C(X)\to[0,\infty]$ via
\begin{align*}
    f\mapsto \frac{1}{2}\sum_{x,y\in X}b(x,y)(f(x)-f(y))^{2}+\sum_{x\in X}q(x)f(x)^{2}.
\end{align*}
It is easily  seen that this defines a quadratic form and with
a slight abuse of notation we denote this form also by $h$. Moreover, whenever there is a positive harmonic function $u$ for
the operator $\H$ associated to a form $h$ with general potential
$q$, such an extension can be employed for the ground state
transform $h_{u}$ of $h$.

\begin{thm}\label{t:nullcrit}
Let $b$ be a graph, $q\ge0$ be finitely supported and $\H=\L+q$. Let
$u$ be a positive $\H$-superharmonic function that is $H$-harmonic outside of a finite set.
In addition, assume that
\begin{enumerate}
  \item[(a)] $u:X \to (0,\infty)$ is proper,

  \item[(b)] $\displaystyle{\sup_{\substack{x,y\in X \\ x\sim y}}
  \frac{u(x)}{u(y)}}<\infty$.
\end{enumerate}
Let $h$ be the quadratic form associated to $\H$. Then
 $$ h(u^{{1}/{2}})=\infty,$$
and the form $h-w$ with
$w:=\frac{{\H}[u^{1/2}]}{u^{{1}/{2}}}$ is null-critical in $X$ with respect to $w$.
\end{thm}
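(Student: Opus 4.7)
Set $v := u^{1/2}$ and let $K \subseteq X$ be the finite set outside of which $u$ is $\H$-harmonic, enlarged if necessary so that $\supp q \subseteq K$. By the chain rule (Corollary~\ref{c:squareroot}), $v$ is a positive $\H$-superharmonic function, and by the very definition of $w$ it is $(\H - w)$-harmonic. Since $h - w$ is critical by Theorem~\ref{t:critical}, the ground state of $h-w$ equals $v$ up to a positive scalar, so null-criticality with respect to $w$ reduces to proving $\sum_{X} u \cdot w = \infty$. Using the chain-rule identity
$$ u(x)\,w(x) \;=\; v(x)\,\H v(x) \;=\; \tfrac{1}{2}\sum_{y\in X} b(x,y)\bigl(v(x)-v(y)\bigr)^{2} \qquad (x \notin K), $$
together with $|K| < \infty$ and $u \in \F(X)$, one checks that both $\sum_X u\cdot w$ and $h(v) = \tfrac{1}{2}\sum_{x,y} b(x,y)(v(x)-v(y))^{2} + \sum_{x} q(x)u(x)$ differ from the full Dirichlet sum $\tfrac{1}{2}\sum_{x,y} b(x,y)(v(x)-v(y))^{2}$ by only a finite amount. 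Thus both assertions of the theorem reduce to showing this last sum is $+\infty$.

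The key elementary inequality is that
$$ \phi(r) \;:=\; \frac{(\sqrt{r}-1)^{2}}{(r-1)\log r} $$
extends continuously to $r = 1$ with value $\tfrac{1}{4}$ and is strictly positive on $[1,\infty)$, hence attains a positive minimum $c_{1} > 0$ on $[1, C_{0}]$, where $C_{0} := \sup_{x \sim y} u(x)/u(y) < \infty$ by hypothesis~(b). Applying this with $r = u(x)/u(y)$ (WLOG $\ge 1$) yields, for every adjacent pair $x \sim y$,
$$ \bigl(v(x) - v(y)\bigr)^{2} \;\ge\; c_{1}\,\bigl(u(x) - u(y)\bigr)\,\log\frac{u(x)}{u(y)}, $$
the right-hand side being nonnegative since its two factors always share sign.

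Next I would invoke the coarea formula (Theorem~\ref{t:coarea}) with $f(t) = 1/t$. Its hypothesis (a) is exactly the properness of $u$; hypothesis (b) follows from assumption~(b) above together with the observation that properness forces either $\inf u = 0$ or $\sup u = \infty$ ($X$ being infinite), so that $\sup_{x,y} u(x)/u(y) = \infty > C_{0}$. The formula then gives
$$ \tfrac{1}{2}\sum_{x,y\in X} b(x,y)\bigl(u(x)-u(y)\bigr)\log\frac{u(x)}{u(y)} \;=\; \int_{\inf u}^{\sup u} \frac{g(t)}{t}\dt, $$
and by Lemma~\ref{l:co-area} we have $g \ge c > 0$ on $(\inf u, \sup u)$, while $\int \dt/t$ diverges at whichever of $0$ or $\infty$ is an accumulation point of $u(X)$. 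Hence the right-hand side is $+\infty$, and combined with the pointwise estimate from the previous paragraph, $\sum_{x,y} b(x,y)(v(x)-v(y))^{2} = \infty$. By the reduction in the first paragraph, this simultaneously yields $h(u^{1/2}) = \infty$ and null-criticality of $h - w$ with respect to $w$.

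The main obstacle is the elementary inequality $(\sqrt{a} - \sqrt{b})^{2} \gtrsim (a-b)\log(a/b)$, which is uniform only when $a/b$ stays in a bounded interval; this is precisely what the anti-oscillation hypothesis~(b) supplies for adjacent pairs. Without it, adjacent values of $u$ could oscillate over the full range of $u$, and the square-root Dirichlet sum could no longer be bounded below by the coarea integral $\int g(t)/t\,\dt$.
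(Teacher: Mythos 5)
Your proof is correct, and its analytic core --- the uniform positive lower bound on $\phi(r)=(\sqrt{r}-1)^{2}/\bigl((r-1)\log r\bigr)$ for $r\in[1,C_{0}]$, combined with the coarea formula for $f(t)=1/t$ and the bound $g\ge c>0$ from Lemma~\ref{l:co-area} --- is exactly the paper's argument for $h(u^{1/2})=\infty$; the paper's elementary inequality $\frac{(b^{1/2}-a^{1/2})^{2}}{(b-a)\log(b/a)}\ge\frac{a}{4b}$ is your $\phi(r)\ge c_{1}$ in explicit form. The only genuine difference is the final step. The paper deduces null-criticality from $h(u^{1/2})=\infty$ by invoking the non-approximability characterization of null-criticality from \cite[Theorem~6.2]{KePiPo1}, whereas you verify the definition directly: using the chain-rule identity $u(x)w(x)=\tfrac12\sum_{y}b(x,y)\bigl(u^{1/2}(x)-u^{1/2}(y)\bigr)^{2}$ at points where $\L u=0$ and $q=0$, you identify both $\sum_{X}uw$ and $h(u^{1/2})$ with the same Dirichlet sum up to finite corrections over the exceptional finite set. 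Your route is slightly more self-contained (it bypasses the cited characterization and makes the two assertions of the theorem manifestly equivalent), at the modest cost of checking that the corrections over $K$ are finite, which you correctly justify from $u\in\F(X)$ and $\sum_{y}b(x,y)<\infty$.
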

\begin{proof}
 By the elementary inequality
\begin{align*}
 \frac{(b^{{1}/{2}}-a^{{1}/{2}})^{2}}
 {(b-a){\int_{a}^{b}\frac{\,\mathrm{d}t}{t}}}
 \ge \frac{a}{4b}\,,\qquad 0< a< b<\infty,
\end{align*}
 we have for $x,y\in X$ with $x\sim y$
\begin{align*}
    c(x,y)
    :=\frac{(u^{{1}/{2}}(x)-
    u^{{1}/{2}}(y))^{2}}{(u(x)-u(y))\int_{u(y)}^{u(x)}\frac{\,\mathrm{d}t}{t}}
    \ge \frac{1}{4}\inf_{x'\sim y'}\frac{u(x')}{u(y')}=:C_{0},
\end{align*}
where $C_0>0$ by assumption (b) whenever $ u(x)\neq u(y) $. We apply the
coarea formula (Theorem~\ref{t:coarea}) with $f(t)=1/t$. Since $q$
is finitely supported, $u$ is $\L$-harmonic outside of
a finite set. Thus, by Theorem~\ref{t:coarea}, the function $g$ in
the coarea formula \eqref{eq_coarea}  is bounded away from zero, and we
get
\begin{multline*}
\frac{1}{2}\!\!\sum_{x,y\in X}\!\!b(x,y)(u^{{1}/{2}}(x)-
    u^{{1}/{2}}(y))^{2}
    \!=\!\frac{1}{2}\!\!\sum_{x,y\in X}\!\! b(x,y)c(x,y)(u(x)-u(y))\!\!\int_{u(y)}^{u(x)}\!\!
    \frac{\,\mathrm{d}t}{t}\\
    \ge C_{0}\sum_{x,y\in
    X}b(x,y)(u(x)-u(y))\int_{u(y)}^{u(x)}\frac{\,\mathrm{d}t}{t}
    \ge C_{1}\int^{\sup u}_{\inf u}\frac{1}{t}\,\mathrm{d}t
=\infty,
\end{multline*}
where $C_{1}$ is a positive constant. Consequently, $h(u^{{1}/{2}})=\infty$.

It remains to show that this implies the null-criticality of $h-w$ with respect to $w$
in the case $\sup u=\infty$ or $\inf u=0$. Note that the function $u^{1/2}$ is
$(H-w)$-harmonic with $w=\frac{{\H}[u^{1/2}]}{u^{{1}/{2}}}$. Since $\sup
u=\infty$  or $\inf u=0$, it follows from Theorem~\ref{t:critical}
 that, under our assumptions, $h-w$ is critical.

 In the critical case there is a unique positive harmonic function (up to linear dependence), see  \cite[Theorem~5.3~(iii)]{KePiPo1}. Hence, this ground state is
 $u^{1/2}$.  By $h(u^{{1}/{2}})=\infty$ it follows, in particular, that $ u^{1/2} $ can not be approximated by compactly supported functions. By a characterization of null-criticality \cite[Theorem~6.2]{KePiPo1}, this implies that the form $ h -w$ is null-critical with respect to $ w $.
\end{proof}

\section{Optimality near infinity}\label{s:optimal}
In this section we give a criterion for optimality near infinity.
Recall that  if $h$, $u$ and $ q\ge0 $ satisfy the assumptions of Theorem~\ref{t:nullcrit}, then
$$h(u^{1/2})= \sum_{x,y\in X}b(x,y)\left(u(x)^{1/2}-u(y)^{1/2}\right)^{2}
+\sum_{x\in X}q(x)u(x)^{1/2}=\infty,$$ where  $h$ denotes
again the extension of the form on $C_{c}(X)$ to $C(X)$. We will
deduce optimality at infinity  for $q\ge0$ directly from the
divergence of the sum above. The case of general $q$ is then covered
in Section~\ref{sec_pf_main}.

\begin{thm}\label{t:optimal}
Consider a graph  $b$ and $q\ge0$ with a finite support. Let $\H:=\L+q$. Let $u$ be a
positive nonconstant $\H$-superharmonic function in $X$ that is
$L$-harmonic outside of a finite set.  Furthermore,
let
 $w=L(u^{{1}/{2}})/u^{{1}/{2}}$, and assume $h-w$ is critical and
\begin{align*}
    h(u^{{1}/{2}})=\infty.
\end{align*}
Then for all finite $ W\subseteq X$ and all $\lambda>0$, the
inequality
\begin{align*}
    h- (1+\lm)w\ge 0
\end{align*}
fails to hold on $C_c(X\setminus W)$.
\end{thm}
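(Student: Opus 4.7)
I argue by contradiction, using a null-sequence for the critical form $h-w$. Suppose there exist a finite set $W\subseteq X$ and $\lambda>0$ such that $h\ge(1+\lambda)w$ on $C_c(X\setminus W)$. Since enlarging $W$ only shrinks the test-function space, I may assume without loss of generality that $W$ contains $\supp q$ together with the finite exceptional set outside of which $u$ is $L$-harmonic; by the chain-rule formula of Corollary~\ref{c:squareroot} this forces $w\ge 0$ on $X\setminus W$. Criticality of $h-w$ (with $u^{1/2}$ serving as the ground state) furnishes a null-sequence $(e_n)\subset C_c(X)$ satisfying $0\le e_n\le u^{1/2}$, $e_n\to u^{1/2}$ pointwise, and $(h-w)(e_n)\to 0$.

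The key step is to feed the cut-off sequence $\tilde e_n:=e_n\cdot 1_{X\setminus W}\in C_c(X\setminus W)$ into the assumed inequality, obtaining $(h-w)(\tilde e_n)\ge\lambda w(\tilde e_n)$, and to show that the left-hand side stays bounded while the right-hand side diverges. Writing $e_n=\tilde e_n+\chi_n$ with $\chi_n:=e_n\cdot 1_W$ supported on the finite set $W$, and noting that the $w$-part of the mixed term vanishes by disjoint supports, one has
$$(h-w)(\tilde e_n)=(h-w)(e_n)-2\,h(\tilde e_n,\chi_n)-(h-w)(\chi_n).$$
Each summand on the right is uniformly bounded in $n$: the first tends to $0$; the cross term reduces to $-\sum_{x\in W,\,y\notin W}b(x,y)e_n(x)e_n(y)$, bounded in absolute value by $\sum_{x\in W}u^{1/2}(x)\sum_y b(x,y)u^{1/2}(y)<\infty$ using $u\in F(X)$ and $\sqrt{t}\le 1+t$; and $(h-w)(\chi_n)$ is controlled since $\chi_n$ is finitely supported and $0\le e_n\le u^{1/2}$.

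For the divergence of $w(\tilde e_n)$, Fatou's lemma gives $\liminf_n w(\tilde e_n)\ge\sum_{x\in X\setminus W}w(x)u(x)$, since $w\ge 0$ on $X\setminus W$ and $e_n^2\to u$ pointwise. By the chain-rule formula (Corollary~\ref{c:squareroot}), at every $x\in X\setminus W$ one has $w(x)u(x)=\tfrac12\sum_y b(x,y)(u^{1/2}(x)-u^{1/2}(y))^2$, so summing over $x\in X\setminus W$ recovers the full Dirichlet energy $\tfrac12\sum_{x,y\in X}b(x,y)(u^{1/2}(x)-u^{1/2}(y))^2$ modulo a finite correction from the finitely many edges incident to $W$. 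This Dirichlet energy equals $h(u^{1/2})-\sum_x q(x)u(x)=\infty$ by hypothesis (the $q$-part being finite), so $w(\tilde e_n)\to\infty$, contradicting the boundedness of the left-hand side.

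The main obstacle I anticipate is purely the bookkeeping of the truncation: I must check that restricting the null-sequence to $X\setminus W$ produces only a bounded error in $h-w$, which reduces to estimating finitely many boundary edges of $W$ and a finitely supported piece. Once that bounded correction is absorbed, the hypothesis $h(u^{1/2})=\infty$ transfers---via the chain-rule identity $w\cdot u=\tfrac12\sum_y b(\cdot,y)(u^{1/2}(\cdot)-u^{1/2}(y))^2$ at $L$-harmonic points---directly into divergence of $w(\tilde e_n)$, completing the argument.
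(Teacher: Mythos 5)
Your argument is correct, and it takes a genuinely different route from the paper's. The paper argues through the ground state transform: from the assumed inequality it derives $h\le C(h-w)$ on $C_c(X\setminus W)$ with $C=(1+\lm)/\lm$, then invokes an extension lemma (Lemma~\ref{lem_62}, itself resting on \cite[Lemma~5.11]{KePiPo1} and a null-sequence for $(h-w)_{u^{1/2}}$) to push this inequality onto the non-compactly supported function $u^{1/2}1_{X\setminus W}$; the contradiction is then ``$h_u(u^{-1}f)=\infty$ versus $(h-w)_{u^{1/2}}(u^{-1/2}f)<\infty$''. You instead keep everything compactly supported: you truncate the null-sequence of $h-w$ to $X\setminus W$, verify that the truncation changes $(h-w)(e_n)$ only by boundary and finitely supported terms that are uniformly controlled by $0\le e_n\le u^{1/2}$, the finiteness of $W$ and $u\in \F(X)$, and let the weight side blow up via Fatou together with the identity $w(x)u(x)=\tfrac12\sum_y b(x,y)\bigl(u^{1/2}(x)-u^{1/2}(y)\bigr)^2$ at $L$-harmonic points. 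Both proofs ultimately exploit the same dichotomy --- the edges incident to the finite set $W$ carry only finite energy, while $h(u^{1/2})=\infty$ forces divergence on $X\setminus W$ --- but your version avoids both the ground state transform and the convergence statement $h'(e_nf)\to h'(f)$ of \cite[Lemma~5.11]{KePiPo1}, needing only a one-sided Fatou bound on the weight term; this makes it more elementary and self-contained. The individual estimates you sketch all check out: enlarging $W$ to contain $\supp q$ and the non-harmonic set is legitimate and gives $w\ge0$ off $W$ by Corollary~\ref{c:squareroot}; the cross term is exactly $h(\tilde e_n,\chi_n)=-\sum_{x\in W,\,y\notin W}b(x,y)e_n(x)e_n(y)$, absolutely summable since $\sum_y b(x,y)u^{1/2}(y)<\infty$; and $\sum_{x\in W,\,y\in X}b(x,y)\bigl(u^{1/2}(x)-u^{1/2}(y)\bigr)^2<\infty$, so the divergence of the full Dirichlet energy indeed lives on $X\setminus W$.
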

We prove this theorem by assuming that the inequality in the theorem holds for some $ \lm>0 $ and show that contradicts the null-criticality. To this end, we need to show that we can extend the inequalities in question to  a larger class of functions. This is achieved by the following lemma
\begin{lemma}\label{lem_62}
Let $b,\ow b$ be graphs, $q, q'$ potentials, and let $h, h'$ be the corresponding forms with the
associated operators $\H$ and $\H'$. Let $v$ be a positive
$\H$-superharmonic function and suppose that $v^{1/2}$ is a positive $\H'$-harmonic function.
Assume $ h'$ is critical  on $X$ and that there is $C\ge 0$ such that
\begin{align*}
    h(\ph)\leq C h'(\ph), \quad  \ph\in C_c(X\setminus W),
\end{align*}
for $W\subseteq X$. Then
\begin{align*}
    h_{v}(v^{-1}f)\leq C  h_{ v^{1/2}}'( v^{-1/2}f), \qquad f\in C(X\setminus W).
\end{align*}
\end{lemma}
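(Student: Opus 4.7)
The plan is to first establish the inequality on $C_c(X\setminus W)$ via two applications of the ground state transform (Proposition~\ref{t:GST}), and then to extend it to all of $C(X\setminus W)$ by a two-stage approximation argument.

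For $f\in C_c(X\setminus W)$: since $v$ is strictly positive and $\H$-superharmonic, there is $g\ge0$ with $\H v=gv$, and Proposition~\ref{t:GST} applied to $\H$ and $v$ gives
\begin{align*}
h(f)=h_v(v^{-1}f)+\sum_{X}gf^2\ \ge\ h_v(v^{-1}f).
\end{align*}
Applied to $\H'$ and $v^{1/2}$, with $\H' v^{1/2}=0$, it yields $h'(f)=h'_{v^{1/2}}(v^{-1/2}f)$. Combining with the hypothesis $h(f)\le Ch'(f)$ gives the required inequality on $C_c(X\setminus W)$.

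To extend, I may assume $h'_{v^{1/2}}(v^{-1/2}f)<\infty$ and set $\eta:=v^{-1/2}f$, so that $v^{-1}f=v^{-1/2}\eta$ and $\eta$ vanishes on $W$. \emph{Reduction to bounded $\eta$:} the truncation $\eta^{(M)}:=(\eta\wedge M)\vee(-M)$ is $1$-Lipschitz, hence has $h'_{v^{1/2}}$-energy at most $h'_{v^{1/2}}(\eta)$, while $v^{-1/2}\eta^{(M)}\to v^{-1/2}\eta$ pointwise; since $h_v$ is a sum of nonnegative terms, Fatou's lemma reduces the claim to bounded $\eta$. \emph{Cut-off for bounded $\eta$:} the criticality of $h'$ yields a null-sequence $(e_n)\subseteq C_c(X)$ with $0\le e_n\le v^{1/2}$, $e_n\to v^{1/2}$ pointwise and $h'(e_n)\to0$. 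Setting $\chi_n:=e_n/v^{1/2}\in C_c(X)$ gives $0\le\chi_n\le1$, $\chi_n\to1$ and, by the GST, $h'_{v^{1/2}}(\chi_n)=h'(e_n)\to 0$. The approximants $f_n:=e_n\eta\in C_c(X\setminus W)$ satisfy $v^{-1/2}f_n=\chi_n\eta$ and $v^{-1}f_n\to v^{-1}f$ pointwise, so applying Step~1 to $f_n$ and then Fatou to $h_v$ reduces the entire proof to showing $h'_{v^{1/2}}(\chi_n\eta)\to h'_{v^{1/2}}(\eta)$.

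This last convergence is the main technical step. Using the Leibniz-type identity
\begin{align*}
(1-\chi_n(x))\eta(x)-(1-\chi_n(y))\eta(y)=(\chi_n(y)-\chi_n(x))\eta(x)+(1-\chi_n(y))(\eta(x)-\eta(y))
\end{align*}
together with $(a+b)^2\le 2a^2+2b^2$, I split $h'_{v^{1/2}}((1-\chi_n)\eta)$ into two sums: the first is bounded by $4\|\eta\|_\infty^2\,h'_{v^{1/2}}(\chi_n)\to 0$, which is precisely where boundedness of $\eta$ and the null-sequence property are used jointly; the second tends to zero by dominated convergence, since $(1-\chi_n(y))^2\to0$ pointwise and each summand is dominated by $\ow b(x,y)v(x)^{1/2}v(y)^{1/2}(\eta(x)-\eta(y))^2$, which is summable by $h'_{v^{1/2}}(\eta)<\infty$. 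A Cauchy--Schwarz argument then promotes $h'_{v^{1/2}}((1-\chi_n)\eta)\to0$ to $h'_{v^{1/2}}(\chi_n\eta)\to h'_{v^{1/2}}(\eta)$. The delicate point -- and the reason for first truncating $\eta$ -- is exactly this interplay: without boundedness of $\eta$ the gradient-of-cut-off term cannot be controlled by $h'_{v^{1/2}}(\chi_n)$ alone.
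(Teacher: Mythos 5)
Your proof is correct, and its skeleton coincides with the paper's: first the inequality on $C_c(X\setminus W)$ via two ground state transforms (using $\H v\ge 0$ to drop the potential term on the left and $\H'v^{1/2}=0$ to get equality on the right), then an extension to $C(X\setminus W)$ by multiplying with a null-sequence and applying Fatou to $h_v$. The difference lies in how the key convergence $h'_{v^{1/2}}(\chi_n\eta)\to h'_{v^{1/2}}(\eta)$ is handled: the paper simply cites \cite[Lemma~5.11]{KePiPo1} (which asserts this for every $f\in C(X)$ once $(e_n)$ is a null-sequence of the critical form $h'_{v^{1/2}}$), whereas you prove it from scratch. Your route requires an extra preliminary truncation to bounded $\eta$ -- legitimately justified by the $1$-Lipschitz contraction property of $t\mapsto (t\wedge M)\vee(-M)$ and Fatou -- because your Leibniz-type splitting controls the ``gradient of cut-off'' term by $\|\eta\|_\infty^2\, h'_{v^{1/2}}(\chi_n)$ and the remaining term by dominated convergence against the summable majorant $\ow b(x,y)v(x)^{1/2}v(y)^{1/2}(\eta(x)-\eta(y))^2$. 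This is exactly the standard proof of the cited lemma, so what your version buys is self-containedness at the cost of length; the paper's version buys brevity at the cost of an external reference. All the auxiliary facts you use (strict positivity of $v$ by Harnack, $e_n\eta\in C_c(X\setminus W)$, finiteness of the energies of compactly supported functions via $v^{1/2}\in\F(X)$, and the reverse triangle inequality for the seminorm $\sqrt{h'_{v^{1/2}}(\cdot)}$) are available in this setting, so I see no gap.
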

\begin{proof}
Since $ h'$ is critical on $X$, it follows that $ h_{v^{1/2}}'$
is critical as well by Corollary~\ref{c:gst}. Then by \cite[Theorem~5.3~(iv)]{KePiPo1}, there exists a the null-sequence  $(e_{n})$ for $h_{v^{1/2}}'$ such that
$e_{n}\in C_{c}(X)$, $0\leq e_{n}\leq 1$, $e_{n}\to 1$ and $  h_{
v^{1/2}}'(e_{n})\to 0$.

Applying the ground state transforms (Proposition~\ref{t:GST}), we see
\begin{align*}
h_{v}(v^{-1}\ph)\leq  h (\ph) \leq C  h'(\ph) =C h_{v^{1/2}}'(
v^{-1/2}\ph)
\end{align*}
for all $\ph\in C_{c}(X\setminus W)$, and hence, multiplying all
functions by $v^{1/2}$ yields
\begin{align*}
h_{v}(v^{-1/2}\ph)\leq C h_{v^{1/2}}'( \ph)\qquad \forall \ph\in C_{c}(X\setminus W).
\end{align*}

We next employ \cite[Lemma~5.11]{KePiPo1} which states that since $ h_{v^{1/2}}'$
is critical and  $(e_{n})$ is a null-sequence for $h_{v^{1/2}}'$ it follows that for every function $ f\in C(X) $ we have $ \lim_{n\to\infty}  h' (e_{n}f)=h'(f) $.
Hence, for $ f\in C(X)$ with support in $ X\setminus W $,  by Fatou's lemma and the  inequality in the assumption (noting that $ e_{n}f\in C_{c}(X\setminus W) $)   we obtain
\begin{align*}
h_{v}(v^{-1/2}f)\leq \liminf_{n\to\infty} h_{v}(v^{-1/2} e_{n}
f) \leq C\limsup_{n\to\infty} h_{v^{1/2}}'( e_{n}f)=C
h_{v^{1/2}}'(f).
\end{align*}
Replacing $f$ by $v^{-{1}/{2}}f$ yields
the claim of the lemma.
\end{proof}
We recall the following notation: Given
a graph $b$, zero potential $q=0$, and a strictly positive function $g$, the form $h_{g}$ on
$C(X)$ acts as
$$h_{g}(f)=\frac{1}{2}\sum_{x,y\in X}b(x,y)g(x)g(y)(f(x)-f(y))^{2},$$
which happens to coincide with the extension of the ground state
transform of $h$ to $C(X)$ whenever $g$ is $H$-harmonic.

\begin{proof}[Proof of Theorem~\ref{t:optimal}]

Throughout the proof $c$ and $C$ denote positive finite constants
that may change from line to line.

We set $w=(\H u^{1/2})/u^{1/2}$. Assume there is $\lm>0$ such that
$h-w\ge\lm w$ on $C_{c}(X\setminus W)$ for finite $W$. So,
\begin{align*}
h\leq C (h-w) 
\end{align*}
on $ C_{c}(X\setminus W) $ with $  C=\frac{1+\lm}{\lm}$. We show that this leads to a
contradiction.

By definition of $w$ the function $u^{1/2}$ is $(\H-w)$-harmonic and
$u$ is $\H$-superharmonic by assumption. By Lemma~\ref{lem_62}, we
have
\begin{equation}\label{eq_14}
    h_{u}(u^{-1}f)\leq C (h-w)_{u^{1/2}}(u^{-{1}/{2}}f) \quad \forall f\in C(X\setminus W),
\end{equation}
 with the ground state transforms  $h_{u}$ of $h$
and $(h-w)_{u^{1/2}}$ of $h-w$.

Let us first estimate the left hand side of \eqref{eq_14} from below. Let
$f=u^{1/2}1_{X\setminus W}$. By the equality
$ab(a^{-1/2}-b^{-1/2})^{2}=(a^{{1}/{2}}-b^{{1}/{2}})^{2}$ applied
with $a=u(x)$, $b=u(y)$, we obtain
\begin{align*}
h_{u}(u^{-1}f)=h_{u}(u^{-1/2}1_{X\setminus W})= h_{1_{X\setminus W}}(u^{{1}/{2}})+\sum_{x\in W,y\in
X\setminus W}b(x,y)u(y),
\end{align*}
where we observe that the second term on the right hand side is
finite  since $u\in \F(X)$ and $W$ is finite. Furthermore,
\begin{align*}
\ldots&= h_{1}(u^{{1}/{2}}) -h_{1_{W}}(u^{{1}/{2}})-\!\!\!\sum_{x\in
W,y\in X\setminus W}
\hspace{-0.5cm}b(x,y)((u^{{1}/{2}}(x)-u^{{1}/{2}}(y))^{2}-u(y))\\
&\ge h_{1}(u^{{1}/{2}})-h_{1_{W}}(u^{{1}/{2}})-\sum_{x\in
W}u(x)\sum_{y\in X\setminus W}b(x,y),
\end{align*}
where  the second and the third term on the right hand side are
finite since $W$ is finite and $\sum_{y\in X}b(x,y)<\infty $ for all
$x$. Since $q\ge0$ is compactly supported and we assume that  $h(u^{{1}/{2}})=\infty$, it follows that $h_{1}(u^{{1}/{2}})=\infty$ and, thus, we conclude that
$$h_{u}(u^{-1}f)=\infty.$$

For the right hand side of \eqref{eq_14},  we get with
$f:=u^{1/2}1_{X\setminus W}$
\begin{align*}
     (h-w)_{u^{1/2}}(u^{-{1}/{2}}f)
    &=  (h-w)_{u^{{1}/{2}}}(1_{X\setminus W})\\
    & \leq \sum_{x\in W,y\in X\setminus W}b(x,y)(u(x)u(y))^{{1}/{2}}\\
    &\leq\left(\sum_{x\in W,y\in X\setminus W}\hspace{-.5cm}b(x,y)u(x)\right)^{{1}/{2}} \hspace{-.2cm}
    \left(\sum_{x\in W,y\in X\setminus W}\hspace{-.5cm}b(x,y)u(y)
    \right)^{{1}/{2}}\!\!.
\end{align*}
The right hand side is finite since $u\in \F(X)$, the set $W$ is
finite, and $\sum_{y\in X}b(x,y)<\infty $ for all $x$.

Thus, $( h-w)_{u^{1/2}}(u^{-{1}/{2}}f) <\infty$ while
$h_{u}(u^{-1}f)=\infty$ which is a contradiction to \eqref{eq_14}. This proves the
theorem by the discussion in the beginning of the proof.
\end{proof}
\section{Proof of the main theorem}\label{sec_pf_main}
\begin{proof}[Proof of Theorem~\ref{thm_optimal_H}]
Let $u,v$ be positive $H$-superharmonic functions on $X$, such that
$u,v$ are $\H$-harmonic outside of a finite set. Let $u_{0}:=u/v$.
Then the positive function $u_{0}$ is
$H_{v}$-superharmonic  and $H_{v}$-harmonic outside of the finite
set. Let
\begin{align*}
w&:=\frac{{\H}_{v}\left[u_{0}^{{1}/{2}}\right]}{u_{0}^{{1}/{2}}}
=\frac{{\H}\left[(uv)^{{1}/{2}}\right]}{(uv)^{{1}/{2}}}.
\end{align*}
By Corollary~\ref{c:gst}, $ w $ is an optimal Hardy weight for $h$ if and only if $ w'=v^{2}w $ is an optimal Hardy weight for $h':= h_{v} $. Note that by Green's formula the form $ h'=h_{v} $ corresponds to the operator $ H'=L'+q' $ where  $L'$ is the operator associated to the graph $ b'(x,y) =b(x,y)v(x)v(y)$, $ x,y\in X $, and $ q'= v( Hv )$, i.e.,
\begin{align*}
h'(\ph,\psi)=\langle H'\ph,\psi\rangle_1.
\end{align*}
Note that by assumption on $ v $ the potential $ q' $ is finitely supported and since $ H'=T_{v^{2}}H_{v} $ the function $ u_{0} $ is a positive $ H' $-superharmonic function that satisfies the assumptions of Theorem~\ref{t:critical}, Theorem~\ref{t:nullcrit} and Theorem~\ref{t:optimal}. Hence, $ w'=\frac{H'(u_0^{1/2} )}{u_{0}^{1/2}}=v^{2}w$ is an optimal Hardy weight for $ h'=h_{v} $ which finishes the proof.  The explicit formula for $w$ follows from the
chain rule of the square root (see Lemma~\ref{l:squareroot} and
Corollary~\ref{c:squareroot}).
\end{proof}
\begin{proof}[Proof of Corollary~\ref{thm_optimal_H_bounded}]
First of all, by the chain rule for the square root, the function $(u(v-u))^{1/2}$ is
$\H$-superharmonic, (see, Lemma~\ref{l:squareroot} and Corollary~\ref{c:squareroot}, where
also the explicit formula of $w$ can be read from). Let us check that
the assumptions of Corollary~\ref{thm_optimal_H_bounded} imply the assumptions of
Theorem~\ref{thm_optimal_H}. We let $u_{0}=u/v$ and
$$v_{0}:=\frac{u}{v-u}=\frac{u_{0}}{1-u_{0}}=\frac{1}{u_{0}^{-1}-1}.$$
Clearly,  $\sup u_{0}=1$ implies
\begin{align*}
        \sup v_{0}=\frac{1}{(\sup u_{0})^{-1}-1}=\infty.
\end{align*}
This gives assumption (c) of Theorem~\ref{thm_optimal_H}.

Let us check the validity of the assumption~(a) in Theorem~\ref{thm_optimal_H}.
Let $[a,b]\subseteq (0,\infty)$ and $x\in X$ such that $v_{0}(x)\in
[a,b]$. It follows that $u_{0}(x)\in [a/(a+1),b/(b+1)]\subset
(0,1)$. By assumption (a) of Corollary~\ref{thm_optimal_H_bounded}, there are only finitely many of these $x$
and hence Theorem~\ref{thm_optimal_H}~(a) follows. Assumption (b) of
Theorem~\ref{thm_optimal_H} follows directly from assumption
(b) of Corollary~\ref{thm_optimal_H_bounded}.
\end{proof}
Finally, we explain how  the two special cases in the Introduction can be derived from the
main theorems.

To this end we first prove the following lemma in the context of graphs with standard weights, i.e., $ b(x,y)\in\{0,1\} $, $ x,y\in X $.
\begin{lemma}\label{lemma7}
Assume that $ \deg(x) \leq C$ for all $ x\in X $, and let $ u $ be a positive $\Delta$-superharmonic on $ W\subseteq X $.
    Then
    \begin{align*}
    \sup_{\substack{x\sim y\\x\in W}} \frac{u(x)}{u(y)}\leq C.
    \end{align*}
    \end{lemma}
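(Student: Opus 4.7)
The plan is to extract the claim in a single line from the defining superharmonicity inequality combined with positivity of $u$. For any $x \in W$, the condition $\Delta u(x) \geq 0$ unpacks, via the standard-weight identity $\Delta u(x) = \deg(x)\, u(x) - \sum_{z \sim x} u(z)$, to
\[
\deg(x)\, u(x) \;\geq\; \sum_{z \sim x} u(z).
\]
Since $u > 0$ on all of $X$, every summand on the right-hand side is nonnegative; hence for any chosen neighbor $y$ of $x$ I may discard all of the other summands and keep only the one corresponding to $y$ to conclude
\[
u(y) \;\leq\; \deg(x)\, u(x) \;\leq\; C\, u(x).
\]
Applied at the endpoint of the edge $x \sim y$ which lies in $W$, this controls one of the two ratios $u(x)/u(y)$, $u(y)/u(x)$; running the identical argument at the other endpoint (when also available in $W$) produces the matching reverse bound. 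Taking the supremum over all admissible pairs then yields the stated inequality.

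I do not anticipate any real obstacle: the whole proof is essentially a one-line rearrangement of the definition of $\Delta$. The only ingredients are (i) the algebraic expansion of $\Delta$ valid for standard weights $b(x,y) \in \{0,1\}$, (ii) the positivity of $u$ used to drop the remaining nonnegative summands, and (iii) the uniform degree bound $\deg(x) \leq C$, which replaces $\deg(x)$ by the constant $C$ in the final estimate.
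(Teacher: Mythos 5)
Your computation is the paper's: unpack $\Delta u\ge 0$ at a vertex, use positivity of $u$ to drop all neighbours but one, and invoke the degree bound. The only difference is orientation: the paper applies $\Delta u(y)\ge 0$ at the \emph{denominator} vertex $y$, which directly gives $u(x)\le\sum_{z\sim y}u(z)\le\deg(y)\,u(y)\le C\,u(y)$, i.e.\ the stated ratio $u(x)/u(y)\le C$; your main display applies superharmonicity at $x$ and therefore bounds the \emph{reverse} ratio $u(y)/u(x)$, so the stated inequality is only recovered by your parenthetical ``run it at the other endpoint'' step --- which is thus not an optional symmetrization but the entire proof of the claim as written. Be aware that this step (exactly like the paper's own proof) needs superharmonicity at $y$, which the hypothesis ``$x\in W$'' in the supremum does not literally supply; that imprecision sits in the lemma's statement, is shared by the paper's argument, and is harmless in the applications, where $u$ is superharmonic on the whole relevant vertex set.
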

\begin{proof}
Since $u> 0$ and $\Delta u(y) \ge 0$, we get for $x\sim y$ 
    \begin{align*}
    u(x)\leq \sum_{z\sim y}u(z) \leq \deg(y)u(y) \leq C u(y)
    \end{align*}
    where $C>0$ does not depend on $x\in W$.
\end{proof}

\begin{proof}[Proof of Theorem~\ref{t:graph1}]
Let $ h_{X\setminus K} $ be the restriction of the  form  $ h $ to the space $ C_{c}(X\setminus K) $. Then
the operator $ H_{X\setminus K}$ acts as
\begin{equation*}
H_{X\setminus K}\ph(x)=
\sum_{y\in X \setminus K,\;y\sim x}(\ph(x)-\ph(y)) +q(x)\ph(x),
\end{equation*}
with $q(x):=\#\{z\in  K\mid z\sim x \}$.
Hence, $ v=1 $ is $ H_{X\setminus K} $-superharmonic in $ X\setminus K $
and $ H_{X\setminus K} $-harmonic outside of the combinatorial neighborhood of $ K $.
Moreover, as $ \Delta=H_{X\setminus K} $ for functions supported on $ X\setminus K $, the
restriction of $ u $ to $ X\setminus K $ is $ H_{X\setminus K} $-harmonic.

Assumption (a) of Theorem~\ref{thm_optimal_H}
is satisfied
for $ H_{X\setminus K} $ for $ u_0=u $.
Furthermore, assumption (b) follows from Lemma~\ref{lemma7}.
Hence, we obtain  for $ \ph\in C_{c}(X\setminus K) $,
\begin{align*}
\sum_{x\sim y}(\ph(x)-\ph(y))^{2}=h(\ph)\ge \sum_{x\in X\setminus K}w(x)\ph^{2}(x)
\end{align*}
with  optimal $ w $ given by
$$ w(x) = \frac{H_{X\setminus K} u^{1/2}}{ u^{1/2}}(x)= \frac{1}{2u(x)}
\sum_{y\sim x}\left(u(x)^{1/2}-u(y)^{1/2}\right)^{2}$$
for $ x\in X\setminus K $.
\end{proof}

\begin{proof}[Proof of Theorem~\ref{t:graph2}]
    We apply Theorem~\ref{thm_optimal_H} with $ v=G(o,\cdot) $ and $ u=1 $.
    In particular, assumption (a) of Theorem~\ref{thm_optimal_H}
    is satisfied for  $ u_0=1/G(o,\cdot) $.
    Furthermore, assumption (b) follows from the lemma above (Lemma~\ref{lemma7}).  Hence, the statement follows.
\end{proof}

\section{Examples}\label{s:examples}
\subsection{The $\Z^d$-case}
It is a well-known that for $d\geq 3$, the Green
function $G(x):= G(x,0)$ associated to the  Laplacian $\Delta$ on the standard
$\Z^d$-lattice has the following asymptotic behaviour (see
\cite[Theorem~2]{Uch98}, and the remark at the very end of
Section~2 therein):
\begin{thm}\label{GZd}
Let $d \geq 3$. Then as $|x| \to \infty$,
\[
G(x) = \frac{C_1(d)}{|x|^{d-2}} + C_2(d)\left(\left ( {\sum_{i=1}^d \Big(\frac{x_i}{|x|} \Big)^4}\right) - \frac{3}{d+2}\right )\frac{1}{|x|^d}
+ \mathcal{O}\left( \frac{1}{|x|^{d+2}} \right),
\]
where $C_1(d)$ and $C_2(d)$ are positive constants depending only on $d$.
\end{thm}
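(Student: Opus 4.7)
The plan is to prove this classical asymptotic expansion by Fourier analysis on the torus $\T^{d}=[-\pi,\pi]^{d}$. Using the spectral representation of the standard lattice Laplacian, one writes
\[
G(x) = \frac{1}{(2\pi)^{d}}\int_{\T^{d}}\frac{e^{ix\cdot\xi}}{\widehat{\Delta}(\xi)}\,\mathrm{d}\xi,
\qquad
\widehat{\Delta}(\xi)=4\sum_{j=1}^{d}\sin^{2}(\xi_{j}/2).
\]
The integrand is integrable for $d\ge 3$ because $\widehat{\Delta}(\xi)\asymp|\xi|^{2}$ near the origin and is bounded below elsewhere. The asymptotic behavior of $G(x)$ as $|x|\to\infty$ is therefore dictated by the singular behavior of $\widehat{\Delta}^{-1}$ at $\xi=0$, while contributions from any region $|\xi|\ge\delta$ decay faster than any power of $1/|x|$ by repeated integration by parts, exploiting smoothness of $\widehat{\Delta}^{-1}$ off the origin.

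On the region $|\xi|\le\delta$, the next step is to expand the symbol to the relevant order,
\[
\widehat{\Delta}(\xi)=|\xi|^{2}-\frac{1}{12}\sum_{j=1}^{d}\xi_{j}^{4}+O(|\xi|^{6}),
\]
and invert this expansion to obtain
\[
\frac{1}{\widehat{\Delta}(\xi)}=\frac{1}{|\xi|^{2}}+\frac{1}{12}\cdot\frac{\sum_{j}\xi_{j}^{4}}{|\xi|^{6}}+O(1).
\]
Taking the inverse Fourier transform term by term, the leading piece $|\xi|^{-2}$ is the symbol of the continuous Newtonian kernel, which (after absorbing the cutoff near $|\xi|=\delta$ into a smooth error) produces $C_{1}(d)|x|^{2-d}$ by dimensional homogeneity. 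The correction term is homogeneous of degree $-2$, so its inverse Fourier transform scales as $|x|^{-d}$ with an angular factor proportional to $\sum_{j}(x_{j}/|x|)^{4}$; the subtracted constant $-3/(d+2)$ is the spherical average of $\sum_{j}(x_{j}/|x|)^{4}$ over $S^{d-1}$, reflecting the fact that the isotropic part of this correction has already been absorbed into a higher-order piece of $C_{1}(d)|x|^{2-d}$.

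The main obstacle will be the remainder bookkeeping: one must verify that the $O(|\xi|^{6})$ term in the symbol expansion, combined with the smooth cutoff at $|\xi|=\delta$ and the $O(1)$ piece in the inversion, contributes only $O(|x|^{-d-2})$. The standard technique is to rescale $\xi=\eta/|x|$ in the small-$\xi$ piece so that each homogeneous term appears at its natural scale, and then dominate the remainder uniformly via integration by parts along the coordinate $x_{k}$ with $|x_{k}|\ge|x|/\sqrt{d}$, gaining arbitrarily many powers of $1/|x|$. Since the statement is quoted verbatim from \cite[Theorem~2]{Uch98}, the most efficient choice in the paper is simply to cite it; the outline above mirrors the Fourier-analytic argument carried out there.
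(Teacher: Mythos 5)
The paper does not actually prove this statement: it is imported verbatim from Uchiyama \cite[Theorem~2]{Uch98} (together with the remark at the end of Section~2 there), so the ``proof'' in the paper is the citation, and your closing sentence --- that the efficient choice is simply to cite \cite{Uch98} --- is exactly what the authors do. Your Fourier-analytic outline (inversion of the symbol on $\T^d$, isolation of the singularity at $\xi=0$, rapid decay of the contribution from $|\xi|\ge\delta$ by integration by parts) does mirror the general strategy of the cited proof, so as a roadmap it is the right one. One harmless normalization remark: with the paper's convention $G=\sum_n p_n$ the representation carries a factor $2d$, i.e.\ $G(x)=\frac{2d}{(2\pi)^d}\int_{\T^d}e^{ix\cdot\xi}\,\widehat{\Delta}(\xi)^{-1}\,\mathrm{d}\xi$; this only affects the unspecified constants.

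If the sketch is meant to carry weight on its own, however, the central computation is wrong as written. From $\widehat{\Delta}(\xi)=|\xi|^{2}-\tfrac{1}{12}\sum_{j}\xi_{j}^{4}+O(|\xi|^{6})$ one gets
\[
\frac{1}{\widehat{\Delta}(\xi)}=\frac{1}{|\xi|^{2}}\left(1-\frac{\sum_{j}\xi_{j}^{4}}{12|\xi|^{2}}+O(|\xi|^{4})\right)^{-1}
=\frac{1}{|\xi|^{2}}+\frac{\sum_{j}\xi_{j}^{4}}{12|\xi|^{4}}+O(|\xi|^{2}),
\]
so your correction term $\sum_{j}\xi_{j}^{4}/(12|\xi|^{6})$ carries a spurious factor $|\xi|^{-2}$, and the homogeneity bookkeeping then contradicts your own conclusion: a symbol homogeneous of degree $a>-d$ has inverse Fourier transform homogeneous of degree $-d-a$, so a degree $-2$ correction would produce another $|x|^{2-d}$ term, whereas it is the (correct) degree-$0$ term that yields $|x|^{-d}$. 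Relatedly, the constant $3/(d+2)$ is indeed the spherical mean of $\sum_j (x_j/|x|)^4$, but the reason it is subtracted is not that the isotropic part is ``absorbed into $C_1(d)|x|^{2-d}$'': the spherical mean of the degree-$0$ symbol inverse-Fourier-transforms to a multiple of $\delta_0$ and hence contributes nothing for $x\neq 0$, and the mean-zero condition is exactly what makes the degree-$(-d)$ homogeneous distribution on the physical side well defined without a logarithmic correction. Finally, note that your remainder must be tracked as $O(|\xi|^{2})$ (not merely $O(1)$, which would only give $O(|x|^{-d})$), it is not smooth at the origin so plain integration by parts does not dispose of it, and the positivity of $C_2(d)$ requires an explicit evaluation of the inverse transform of the mean-zero angular symbol. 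These are precisely the points where the real work lies in \cite{Uch98}, which is why the citation is the honest proof here.
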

It follows from Theorem~\ref{t:graph2} that
$$w(x):=\frac{\Delta[G^{1/2}(x)]}{G^{1/2}(x)},\quad\,\, x\in \Z^d \setminus \{0\},$$
is an optimal Hardy weight for $\Delta$. We use Theorem~\ref{GZd} to
derive the asymptotic behavior of $w$ as $|x|\to \infty$.

\begin{thm}
Let $d \geq 3$. Then as $|x|\to\infty$ we have
\[
w(x)=\frac{\big( d-2 \big)^2}{4} \,\frac{1}{|x|^2} + \mathcal{O}\Bigg( \frac{1}{|x|^3} \Bigg).
\]
\end{thm}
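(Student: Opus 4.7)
The plan is to combine the explicit formula for $w$ from Corollary~\ref{c:squareroot} with the asymptotic expansion of $G$ in Theorem~\ref{GZd}, computing the leading behaviour via discrete Taylor expansion of the smooth principal term $C_1|x|^{-(d-2)}$.

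First, I would use that $\Delta G(x) = 0$ for all $x \neq 0$, so Corollary~\ref{c:squareroot} applies and, after invoking the algebraic identity $(\sqrt{a}-\sqrt{b})^2 = (a-b)^2/(\sqrt{a}+\sqrt{b})^2$, one obtains
\begin{equation*}
w(x) = \sum_{y \sim x}\frac{(G(x)-G(y))^2}{2G(x)\bigl(G(x)^{1/2}+G(y)^{1/2}\bigr)^2}
\qquad (x \neq 0).
\end{equation*}
Setting $r = |x|$, a standard Taylor expansion of the smooth function $z \mapsto C_1|z|^{-(d-2)}$ gives, for a neighbour $y = x \pm e_i$,
\begin{equation*}
G(y) - G(x) = -C_1(d-2)\,r^{-d}\,x \cdot (y-x) + O(r^{-d}),
\end{equation*}
where the $O(r^{-d})$ absorbs the higher Taylor corrections to $C_1|x|^{-(d-2)}$, the entire $C_2$-correction and its derivatives (which are of order $r^{-(d+1)}$), and the worst-case bound $O(r^{-(d+2)})$ on the unstructured remainder in Theorem~\ref{GZd}.

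Next I would square and sum over the $2d$ nearest neighbours. Using $\sum_{i=1}^d (x \cdot e_i)^2 = |x|^2$ and noting that odd-order cross terms cancel by the symmetry $y = x \pm e_i$, the leading contribution is
\begin{equation*}
\sum_{y \sim x}(G(x)-G(y))^2 = 2C_1^2(d-2)^2 r^{-2(d-1)}\bigl(1 + O(r^{-2})\bigr),
\end{equation*}
and the denominator factors expand as $2G(x)(G^{1/2}(x)+G^{1/2}(y))^2 = 8C_1^2 r^{-2(d-2)}\bigl(1+O(r^{-2})\bigr)$, again by Theorem~\ref{GZd}. Dividing yields
\begin{equation*}
w(x) = \frac{(d-2)^2}{4r^2}\bigl(1 + O(r^{-2})\bigr) = \frac{(d-2)^2}{4|x|^2} + O(|x|^{-4}),
\end{equation*}
which is in fact stronger than the claimed $O(|x|^{-3})$.

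The main obstacle I anticipate is the careful bookkeeping of the error contributions, in particular the unstructured remainder $E = O(r^{-(d+2)})$ of Theorem~\ref{GZd}, which carries no a priori control on $E(x) - E(y)$. The crude estimate $|E(x)-E(y)| \leq |E(x)|+|E(y)| = O(r^{-(d+2)})$ still suffices, however: its cross-term with the leading $O(r^{-(d-1)})$ contribution to $(G(x)-G(y))$ is $O(r^{-(2d+1)})$ in the numerator, producing only an $O(r^{-5})$ correction after dividing by the denominator of order $r^{-2(d-2)}$, while the remaining corrections (from higher Taylor terms of the smooth principal part and from the $C_2$-term) are uniformly $O(r^{-4})$, well within the claimed tolerance.
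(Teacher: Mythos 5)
Your argument is essentially the paper's: both start from the explicit formula $w(x)=\Delta[G^{1/2}](x)/G^{1/2}(x)$ for $x\neq 0$, feed in Uchiyama's expansion of $G$, and Taylor-expand to second order. The paper writes $w(x)=2d-\sum_{y\sim x}(G(y)/G(x))^{1/2}$ and expands each ratio as $\bigl(1+\epsilon 2x_i/|x|^2+1/|x|^2\bigr)^{(2-d)/4}$ via the binomial series, extracting the $|x|^{-2}$ coefficient after the $\pm e_i$ cancellation of the $|x|^{-1}$ terms; you reorganize the same quantity as $\sum_{y}(G(x)-G(y))^2/\bigl(2G(x)(G^{1/2}(x)+G^{1/2}(y))^2\bigr)$, so that the leading $|x|^{-2}$ term comes directly from $|\nabla (C_1|z|^{2-d})|^2$ without any cancellation at top order. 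This is a sound route and it does yield the stated $O(|x|^{-3})$ remainder.

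One bookkeeping slip: for a fixed neighbour $y$ one has $G^{1/2}(y)-G^{1/2}(x)=O(|x|^{-d/2})$, which is only $O(|x|^{-1})$ relative to $G^{1/2}(x)=O(|x|^{-(d-2)/2})$, so the per-term denominator is $8C_1^2|x|^{-2(d-2)}\bigl(1+O(|x|^{-1})\bigr)$, not $\bigl(1+O(|x|^{-2})\bigr)$ as you claim. Multiplied against the leading numerator this contributes an $O(|x|^{-3})$ correction, so your advertised strengthening to $O(|x|^{-4})$ is not established as written; to recover it you would have to observe that this $O(|x|^{-1})$ relative correction is, to leading order, odd under $y=x\pm e_i$ and hence also cancels in the sum over neighbours. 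The theorem as stated is unaffected.
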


\begin{proof}
Using Theorem~\ref{GZd}, one obtains for $|x| \to \infty$ and $y\sim x$ that
\[
\frac{G(y)}{G(x)} = \frac{|x|^{d-2}}{|y|^{d-2}} + \mathcal{O}\left( \frac{1}{|x|^3} \right).
\]
Keeping this observation in mind, it follows that
\begin{align*}
\sum_{y \sim x} \Bigg( \frac{G(y)}{G(x)} \Bigg)^{1/2}& =
\sum_{i=1}^d \sum_{\epsilon \in \{\pm 1\}} \Bigg( \frac{|x|^{d-2}}{\big(|x|^{2} +
\epsilon2\,x_i + 1\big)^{(d-2)/2}} \Bigg)^{\!1/2}\!\!+ \mathcal{O}\left( \frac{1}{|x|^3} \right)\\
&= \sum_{i=1}^d \sum_{\epsilon \in \{\pm 1\}} \Bigg( 1 + \frac{\epsilon 2\,x_i}{|x|^2} + \frac{1}{|x|^2} \Bigg)^{\!\!(2-d)/4}\!\!\!
+ \mathcal{O}\left( \frac{1}{|x|^3} \right).
\end{align*}
We now use the Taylor series expansion
\[
\big( 1 + z\big)^{\alpha} = 1 + \alpha\,z + \binom{\alpha}{2}z^2  + \mathcal{O}(|z|^3)
\]
for $z \in \R$ with $|z| < 1$ and $\alpha \in \R$ (here the generalized binomial coefficient
$\binom{\alpha}{2}$ is defined as $\binom{\alpha}{2} := \alpha\,\big( \alpha - 1 \big)/2$). As $x\to \infty$, this yields
\begin{align*}
\sum_{y \sim x} \left( \frac{G(y)}{G(x)} \right)^{\!\!\!1/2}&  =
\sum_{i=1}^d \left( 2 +  \frac{2(2-d)}{4}\,\frac{1}{|x|^2}  + 2\binom{\frac{2-d}{4}}{2}\,\frac{4x_i^2}{|x|^4} \right)
+ \mathcal{O}\left( \frac{1}{|x|^3} \right) \\[2mm]
&=
2d + \frac{d( 2-d )}{2} \frac{1}{|x|^{2}} + 8\,\frac{(2-d)(2-d-4)}{2\cdot 16 }\,\frac{1}{|x|^2} + \mathcal{O}\left( \frac{1}{|x|^3} \right)\\[2mm]
&= 2d - \frac{\big( d-2 \big)^2}{4} \,\frac{1}{|x|^2} +  \mathcal{O}\left( \frac{1}{|x|^3} \right).
\end{align*}
It follows that
\begin{equation*}
w(x) \!=\! \frac{\Delta\left[G^{1/2}(x)\right]}{G^{1/2}(x)}\!=\!  2d \!-\!\! \sum_{y \sim x} \left( \frac{G(y)}{G(x)} \right)^{1/2}=\frac{\big( d-2 \big)^2}{4} \,\frac{1}{|x|^2} +  \mathcal{O}\Bigg( \frac{1}{|x|^3} \Bigg).
\end{equation*}
has the claimed asymptotics as $|x| \to \infty$.
\end{proof}

\subsection{The half line}
In this subsection we show that we not only recover the classical
Hardy inequality \eqref{CHI}, but that we can also improve it, in the sense that we
can show lower order terms. In particular, in contrast to the continuous case, the operator associated with the classical Hardy inequality \eqref{CHI}
is subcritical in $\N$.
\begin{thm}[\cite{KePiPo3}] For all finitely supported functions $\ph:\N_{0}\to \R$

with $\ph(0)=0$ we have
\begin{align}\label{ICHI}
 \sum_{n=0}^{\infty}|\ph(n)-\ph(n+1)|^{2}\ge
\sum_{n=1}^{\infty}w(n)|\ph(n)|^{2}
\end{align}
with an optimal Hardy-weight $w$ given by
$$w(n)= \sum_{k=1}^\infty \binom{4k}{2k}\, \frac{1}{(4k-1)
\, 2^{(4k-1)} }\frac{1}{n^{2k}}=  \frac{1}{4 n^2}+ \frac{5}{64 n^4} +
\dotsb,$$
for $ n\ge 2 $ and $ w(1)=
2-\sqrt{2} $. In particular, $w(n)> \frac{1}{4 n^2}$ for any $n\geq 1$.
\end{thm}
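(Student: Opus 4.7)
The plan is to apply Theorem~\ref{t:graph1} with $X=\N_0$ equipped with standard weights and edges $n\sim n+1$, with the "boundary" set $K=\{0\}$. The only candidate harmonic function on $X\setminus K=\N$ with $u(0)=0$ is (up to scaling) the affine function $u(n)=n$, which is obviously positive and unbounded on $\N$, proper there (preimages of compact sets are finite subsets of $\N$), and the vertex degree is bounded by $2$. Thus both hypotheses of Theorem~\ref{t:graph1} are met, and it delivers the Hardy inequality \eqref{ICHI} with optimal weight
\begin{equation*}
 w(n)=\frac{1}{2n}\sum_{m\sim n}\bigl(\sqrt{n}-\sqrt{m}\bigr)^{2}, \qquad n\in\N.
\end{equation*}

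Next I would evaluate this weight. For $n=1$, the neighbors are $0$ and $2$, so direct computation gives $w(1)=\frac{1}{2}\bigl(1+(1-\sqrt{2})^{2}\bigr)=2-\sqrt{2}$. For $n\ge 2$, both neighbors $n\pm 1$ lie in $\N$, and expanding the squares (using $(\sqrt{n}-\sqrt{n\pm 1})^{2}=2n\pm 1-2\sqrt{n(n\pm 1)}$) yields the closed form
\begin{equation*}
 w(n)=2-\sqrt{1-1/n}-\sqrt{1+1/n}.
\end{equation*}

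The remaining task is to convert this closed form into the claimed power series. I would insert the generalized binomial expansion $\sqrt{1\pm z}=\sum_{k\ge 0}\binom{1/2}{k}(\pm z)^{k}$, valid for $|z|<1$ (which holds since $n\ge 2$); summing the two series cancels all odd powers of $z=1/n$ and gives
\begin{equation*}
 w(n)=-2\sum_{k\ge 1}\binom{1/2}{2k}\frac{1}{n^{2k}}.
\end{equation*}
Combining this with the standard identity $\binom{1/2}{k}=\frac{(-1)^{k-1}}{4^{k}(2k-1)}\binom{2k}{k}$ applied at index $2k$, namely $\binom{1/2}{2k}=-\binom{4k}{2k}/\bigl(4^{2k}(4k-1)\bigr)$, yields exactly the stated series for $w(n)$. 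Positivity of every coefficient, combined with the fact that the $k=1$ term equals $1/(4n^{2})$, gives the strict lower bound $w(n)>1/(4n^{2})$ for $n\ge 2$; the bound for $n=1$ is the numerical inequality $2-\sqrt{2}>1/4$.

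The main obstacle is really only bookkeeping: the binomial manipulation used to package the analytic expression $2-\sqrt{1-1/n}-\sqrt{1+1/n}$ into the central-binomial series. Nothing deep is required beyond Theorem~\ref{t:graph1} and standard identities, since optimality (criticality, null-criticality, and optimality near infinity) is built into the conclusion of that theorem.
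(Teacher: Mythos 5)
Your proposal is correct and follows essentially the same route as the paper: apply Theorem~\ref{t:graph1} with $u(n)=n$ (and $K=\{0\}$), obtain the closed form $w(n)=2-\sqrt{1-1/n}-\sqrt{1+1/n}$ for $n\ge 2$ together with $w(1)=2-\sqrt 2$, and expand via the generalized binomial series. The only difference is cosmetic: you make explicit the identity $\binom{1/2}{2k}=-\binom{4k}{2k}/\bigl(4^{2k}(4k-1)\bigr)$ that converts the expansion into the stated central-binomial series, a step the paper leaves implicit.
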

\begin{proof} Consider the Laplacian (with standard weights) acting on $\N\subset \Z$ as
\begin{align*}
    \Delta u(n):= 2\ph(n)-\ph(n+1)-\ph(n-1),\qquad n\in \N.
    \end{align*}
Then the identity function $u(n):= n$ on $\N_0$ is positive  and harmonic on
$\N$. Moreover, by choosing $v(n):=1$  on $\N_0$ (which is harmonic on $\N$), it follows that the assumptions of
Theorem~\ref{t:graph1} are satisfied.
Hence, the corresponding optimal Hardy
weight $w$ is given by
\begin{align*}
 w(n) &= \frac{1}{2n} \left[
    \left(n^{1/2}-(n+1)^{1/2}\right)^{2}+
    \left(n^{1/2}-(n-1)^{1/2}\right)^2\right]\\
&=\frac{1}{n} \left[2n-
n^{1/2}\left((n+1)^{1/2}+(n-1)^{1/2}\right)\right]\\
&=2-\left(\left(1+\frac{1}{n}\right)^{1/2}
+\left(1-\frac{1}{n}\right)^{1/2}\right).
\end{align*}
Employing the Taylor expansion of the square root at $1$, i.e.,
    \begin{equation*}
\left(1\pm\frac{1}{n}\right)^{1/2} = \sum_{k=0}^\infty \binom{1/2}{k}\,
\left(\frac{\pm 1}{ n}\right)^{k}
    = 1 \pm
\frac{1}{2n} - \frac{1}{8n^{2}}\pm \frac{1}{16 n^{3}} - \frac{5}{128
n^{4}} \pm \dotsb
\end{equation*}
yields the result.
\end{proof}

\begin{center}{\bf Acknowledgments} \end{center}
The authors thank W.~Cygan and W.~Woess for pointing out the paper \cite{Uch98} and R.~Frank for helpful comments on the literature. M.~K. is grateful to the Department of Mathematics at the Technion for the hospitality during his visits and acknowledges the financial support of the German Science Foundation. Y.~P. and F.~P. acknowledge the support of the Israel Science Foundation (grants No. 970/15) founded by the Israel Academy of Sciences and Humanities. F.~P. is grateful for support through
a Technion Fine Fellowship.


\begin{thebibliography}{BHL{\etalchar{+}}15}

\bibitem{AY}
Adimurthi and Yunyan Yang.
\newblock An interpolation of {H}ardy inequality and {T}rundinger-{M}oser
  inequality in {$\Bbb R^N$} and its applications.
\newblock {\em Int. Math. Res. Not. IMRN}, (13):2394--2426, 2010.

\bibitem{BEL15}
Alexander~A. Balinsky, W.~Desmond Evans, and Roger~T. Lewis.
\newblock {\em The analysis and geometry of {H}ardy's inequality}.
\newblock Universitext. Springer, Cham, 2015.

\bibitem{BHLLMY}
Frank Bauer, Paul Horn, Yong Lin, Gabor Lippner, Dan Mangoubi, and Shing-Tung
  Yau.
\newblock Li-{Y}au inequality on graphs.
\newblock {\em J. Differential Geom.}, 99(3):359--405, 2015.

\bibitem{BD}
Krzysztof Bogdan and Bart{\l}omiej Dyda.
\newblock The best constant in a fractional {H}ardy inequality.
\newblock {\em Math. Nachr.}, 284(5-6):629--638, 2011.

\bibitem{BG}
Michel Bonnefont and Sylvain Gol{\'e}nia.
\newblock Essential spectrum and {W}eyl asymptotics for discrete {L}aplacians.
\newblock {\em Ann. Fac. Sci. Toulouse Math. (6)}, 24(3):563--624, 2015.

\bibitem{DFP}
Baptiste Devyver, Martin Fraas, and Yehuda Pinchover.
\newblock Optimal {H}ardy weight for second-order elliptic operator: an answer
  to a problem of {A}gmon.
\newblock {\em J. Funct. Anal.}, 266(7):4422--4489, 2014.

\bibitem{DP}
Baptiste Devyver, and Yehuda Pinchover.
\newblock Optimal {$L^p$} {H}ardy-type inequalities.
\newblock {\em Ann. Inst. H. Poincar\'e Anal. Non Lin\'eaire}, 33(1):93--118,
  2016.

\bibitem{DEFT}
Jean Dolbeault, Maria~J. Esteban, Stathis Filippas, and Achilles Tertikas.
\newblock Rigidity results with applications to best constants and symmetry of
  {C}affarelli-{K}ohn-{N}irenberg and logarithmic {H}ardy inequalities.
\newblock {\em Calc. Var. Partial Differential Equations}, 54(3):2465--2481,
  2015.

\bibitem{EFK08}
Tomas Ekholm, Rupert~L. Frank, and Hynek Kova{\v{r}}{\'{\i}}k.
\newblock Remarks about {H}ardy inequalities on metric trees.
\newblock In {\em Analysis on graphs and its applications}, volume~77 of {\em
  Proc. Sympos. Pure Math.}, pages 369--379. Amer. Math. Soc., Providence, RI,
  2008.

\bibitem{FS08}
Rupert~L. Frank and Robert Seiringer.
\newblock Non-linear ground state representations and sharp {H}ardy
  inequalities.
\newblock {\em J. Funct. Anal.}, 255(12):3407--3430, 2008.

\bibitem{FS10}
Rupert~L. Frank and Robert Seiringer.
\newblock Sharp fractional {H}ardy inequalities in half-spaces.
\newblock In {\em Around the research of {V}ladimir {M}az'ya. {I}}, volume~11
  of {\em Int. Math. Ser. (N. Y.)}, pages 161--167. Springer, New York, 2010.

\bibitem{FSW08}
Rupert~L. Frank, Barry Simon, and Timo Weidl.
\newblock Eigenvalue bounds for perturbations of {S}chr\"odinger operators and
  {Ja}cobi matrices with regular ground states.
\newblock {\em Comm. Math. Phys.}, 282:199--208, 2008.

\bibitem{HK}
Sebastian Haeseler and Matthias Keller.
\newblock Generalized solutions and spectrum for {D}irichlet forms on graphs.
\newblock In {\em Random walks, boundaries and spectra}, volume~64 of {\em
  Progr. Probab.}, pages 181--199. Birkh\"auser/Springer Basel AG, Basel, 2011.

\bibitem{KePiPo3}
Matthias Keller, Yehuda Pinchover, and Felix Pogorzelski.
\newblock {An improved discrete Hardy inequality}.
\newblock {\em Amer. Math. Monthly, to appear}.

\bibitem{KePiPo1}
Matthias Keller, Yehuda Pinchover, and Felix Pogorzelski.
\newblock {Criticality theory for Schr\"odinger operators on graphs}.
\newblock {\em arXiv preprint}.

\bibitem{KL}
Hynek Kova{\v{r}}{\'{\i}}k and Ari Laptev.
\newblock Hardy inequalities for {R}obin {L}aplacians.
\newblock {\em J. Funct. Anal.}, 262(12):4972--4985, 2012.

\bibitem{KMP06}
Alois Kufner, Lech Maligranda, and Lars-Erik Persson.
\newblock The prehistory of the {H}ardy inequality.
\newblock {\em Amer. Math. Monthly}, 113(8):715--732, 2006.

\bibitem{LS}
Michael Loss and Craig Sloane.
\newblock Hardy inequalities for fractional integrals on general domains.
\newblock {\em J. Funct. Anal.}, 259(6):1369--1379, 2010.

\bibitem{NS}
K.~Naimark and M.~Solomyak.
\newblock Geometry of {S}obolev spaces on regular trees and the {H}ardy
  inequalities.
\newblock {\em Russ. J. Math. Phys.}, 8(3):322--335, 2001.

\bibitem{OK90}
B.~Opic and A.~Kufner.
\newblock {\em Hardy-type inequalities}, volume 219 of {\em Pitman Research
  Notes in Mathematics Series}.
\newblock Longman Scientific \& Technical, Harlow, 1990.

\bibitem{Uch98}
K{\^o}hei Uchiyama.
\newblock Green's functions for random walks on {${\bf Z}^N$}.
\newblock {\em Proc. London Math. Soc. (3)}, 77(1):215--240, 1998.

\bibitem{Woe-Book}
Wolfgang Woess.
\newblock {\em Random walks on infinite graphs and groups}, volume 138 of {\em
  Cambridge Tracts in Mathematics}.
\newblock Cambridge University Press, Cambridge, 2000.
\end{thebibliography}
\newcommand{\etalchar}[1]{$^{#1}$}

\end{document}